\documentclass[a4paper, 11pt, twoside]{article}
\usepackage{amsmath,amsthm}
\usepackage{amssymb,latexsym}
\usepackage{mathrsfs}
\usepackage{enumerate}
\usepackage[colorlinks,citecolor=blue,dvipdfm]{hyperref}
\usepackage[numbers,sort&compress]{natbib}
\usepackage{indentfirst}
\DeclareMathAlphabet{\mathpzc}{OT1}{pzc}{m}{it}

\newtheorem{theorem}{Theorem}[section]
\newtheorem{corollary}{Corollary}[section]
\newtheorem{lemma}{Lemma}[section]

\newtheorem{definition}{Definition}[section]
\newtheorem{remark}{Remark}[section]

\theoremstyle{definition} \theoremstyle{remark}
\numberwithin{equation}{section}
\allowdisplaybreaks

\begin{document}
	\markboth{X. Huang, L. Peng}{Local existence and nonexistence of time nonlocal evolution equations}
	
	\date{}
	\baselineskip 0.22in
	\title{{\bf The existence and nonexistence for time nonlocal evolution equations with superlinear sources }}
	
	\author{Xi Huang$^{1}$, Li Peng$^{1}$\thanks{\footnotesize {Corresponding author: lipeng\_math@xtu.edu.cn.}} \\[1.8mm]
		\footnotesize  {$^1$Faculty of Mathematics and Computational Science, Xiangtan University}\\
		\footnotesize  {Hunan 411105, China}
	}
	
	\maketitle
	
	\begin{abstract}
		We consider the solvable behaviour for the initial value problem of time nonlocal evolution equations with the kernels of type $(\mathcal{PC})$. Our aim is to analyze some sufficient conditions ensuring local existence, integrability of mild solutions when the nonlinear term exhibits rapid growth. Moreover, a sufficient condition of the unsolvable result is also established. It turns out that the solvable behaviour is closely connected with the index on the initial value, which occurs a critical dimension phenomenon. The proofs rely on subordination and monotone iterative method. Finally, several examples are given to illustrate the wide applicability of the results.\\[2mm]
		{\bf Keywords:} Time nonlocal evolution equations, monotone iterative method, existence, nonexistence.\\[2mm]
		{\bf 2010 MSC:}  35R11; 34A12; 26A33
		
	\end{abstract}
	
	\section{Introduction}
	The memory integral operator serves as an efficient mathematical tool for accurately characterizing complex memory-dependent rheological and thermal behaviors in areas such as viscoelastic heat conduction and transport in disordered media. For instance, in certain non-Fickian heat conduction processes, let $u(t,x)$ denote the temperature in a rigid domain $\Omega \subset \mathbb{R}^3$, and let $m \in BV_{\mathrm{loc}}(\mathbb{R}_+)$ characterize the memory effect of the system. The constitutive relation for the internal energy density is given by
	\begin{align*}
		\epsilon(t,x) = \int_0^\infty u(t-\theta, x)\, \mathrm{d}m(\theta).
	\end{align*}
	Recently, in \cite{V. Vergara}, combining the classical Fourier law $q(t,x) = -c_0 \nabla u(t,x)$ with the energy balance law
	\begin{align*}
		\partial_t \epsilon(t,x) + \operatorname{div} q(t,x) = h(t,x),
	\end{align*}
	and considering the case $m(t) = \int_0^t k(\theta)\, \mathrm{d}\theta$, a simplified model for heat conduction with memory has been established, which takes the form
	\begin{align}\label{1.1}
		\partial_t (k \ast [u-u_0])(t, x) - c_0 \Delta u(t, x) = h(t, x), \quad t>0, \ x \in \Omega \subset \mathbb R^N,
	\end{align}
	where $c_0 > 0$ is the thermal conductivity, $\Delta$ is the Laplacian operator and the convolution $\ast$ is defined by $(k \ast v)(t)=\int_0^t k(t-\theta)v(\theta)\, \mathrm{d}\theta$ for $t \geq 0$. The kernel $k$ satisfies the following condition:
	\begin{itemize}
		\item[$(\mathcal{PC})$] $k \in L^1_{\mathrm{loc}}(\mathbb R_+)$ is nonnegative and nonincreasing, and there exists a kernel $l \in L^1_{\mathrm{loc}}(\mathbb R_+)$ such that $(k \ast l)(t) = 1$ for $t > 0$. We also record this situation as $(k,l) \in (\mathcal{PC})$.
	\end{itemize}
	
	Equation \eqref{1.1} covers both the time-fractional diffusion equation and the distributed-order fractional diffusion equation, which have been the subject of extensive study, see \cite{A. Agresti, H. Kozono, A.N. Kochubei, K.M. Owolabi}. For the nonlocal evolution equations with the kernels of type $(\mathcal{PC})$, sharp decay estimates for solutions to a class of nonlocal equations on a bounded domain were derived by Vergara and Zacher \cite{V. Vergara}, whose arguments employed energy methods and a novel inequality for integro-differential operators. Subsequently, using Fourier multiplier methods and relaxation functions theory, Kemppainen et al. \cite{J. Kemppainen} obtained the $L^r$ estimates for solutions to \eqref{1.1} with $f=0$, and then they claimed that the decay profile of solutions exhibits a critical dimension phenomenon. Very recently, Pozo and Vergara \cite{J.C. Pozo} used the subordination principle to construct the fundamental solutions of fully non-local equations, and
	large-time behavior of solutions was found by using harmonic analysis and
	Fourier multiplier methods.
	
	This paper focuses on a class of time-nonlocal diffusion equations with a $(\mathcal{PC})$ kernel and a nonlinear term $f(u)$ that exhibits exponential or even super-exponential growth. Such equations accurately capture complex anomalous transport phenomena characterized by strong memory effects, including solute transport in highly disordered porous media and heat conduction in viscoelastic materials with long-range temporal correlations. Their modeling capability stems from two core mechanisms: the $(\mathcal{PC})$ kernel reproduces the inherent history memory effect in non-Fickian transport, while the rapidly growing nonlinear term precisely characterizes the intense energy accumulation driven by strong external fields and the potential occurrence of finite-time blow-up. The combination of these two mechanisms gives rise to dissipative mechanisms and blow-up threshold conditions that are absent in classical parabolic equations with analogous nonlinearities. Accordingly, we study the following initial value problem for time-nonlocal evolution equations:
	\begin{equation}\label{1.2}
		\begin{cases}
			\partial_t\left[k\ast(u-u_0)\right]-\Delta u= f(u), \quad 0<t<T,\ x\in \mathbb R^N,\\
			u(0,x)=u_0(x),\quad x\in \mathbb R^N,
		\end{cases}
	\end{equation}
	where $u_0$ is the initial condition, $(k,l) \in (\mathcal{PC})$ and $f$ satisfies the following rapid growth condition:
	\begin{itemize}
		\item[$(A_f)$] $f \in C^1[0, \infty)$ satisfies$ f(u)>0, \ f'(u)>0 ~{\rm and}~ G(u):=\int_{u}^{\infty} \frac{1}{f(\tau)} \mathrm{d}\tau< \infty ~{\rm for}~ u>0$, and
		moreover, $\lim\limits_{u \to \infty}f'(u)G(u)=1$.
	\end{itemize} 
	The condition $\lim\limits_{u \to \infty}f'(u)G(u)=1$ characterizes functions whose growth rate is super-polynomial. Prominent examples satisfying $(A_f)$ include $f(\tau)=e^{\tau^p}$ ($p\geq 1$) and $f(\tau)=e^{e^{\tau}}$, see \cite{M. Suzuki 19}.
	
	In the papers mentioned above, it was shown that the work on nonlocal evolution equations with the kernels of type $(\mathcal{PC})$ was mainly focused on asymptotic profile of solutions for the case that $f$ is linear, there are very limited literatures dealing with solvability in the case of nonlinearity.
	Superlinearly growing source terms introduce a strong self-exciting positive feedback mechanism that competes intensely with diffusion, dissipation, and memory effects. Investigating the conditions for local existence and non-existence of solutions to such equations not only reveals the critical balance among source term growth, medium properties, and initial data regularity, thereby refining the well-posedness theory, but also characterizes whether the physical system can evolve reasonably in the initial stage and identifies the critical thresholds for instantaneous singularity formation, loss of control, or even system breakdown. For instance, some results on the well-posedness, asymptotic behavior and  blow-up for the solutions of classical heat equations and time-fractional diffusion equations were obtained when the nonlinear source $f$ is superlinear, see \cite{Y. Fujishima, N. Ioku, M. Suzuki 19, Y. Xie}. Most recently, \cite{M. Suzuki 22} studied the existence conditions for local positive solutions of time-fractional diffusion equations with rapidly growing source terms using the monotone iterative method. As a special case of nonlocal evolution equations, that work highlighted the crucial role of the Gaussian heat kernel and the Wright function $\Phi_\alpha$ in the proofs. Inspired by this idea, we introduce the propagation function $w(t, \tau)$ for $0 < \tau, t < \infty$ associated with the completely positive kernel $l$ (cf. \cite{J. Kemppainen}) and combine the monotone iterative method with relaxation function theory to establish the local existence of positive solutions to \eqref{1.2} in the locally uniform space $L_{ul}^p(\mathbb R^N)$. We also discuss the non-existence of positive solutions under certain conditions. 
	
	The article is structured as follows. In Section 2, we introduce the necessary notations, the concepts of certain locally uniform spaces, and the relaxation function theory. Section 3 is devoted to establishing the existence of a supersolution and presenting several $L_{ul}^p$–$L_{ul}^q$ estimates. The local existence and nonexistence of solutions to \eqref{1.2} are investigated in Section 4. Finally, in Section 5, we provide several examples of pairs $(k,l)$ of type $(\mathcal{PC})$ to illustrate our results, using the Karamata–Feller Tauberian theorem.
	
	\section{Preliminaries}
	Let $1 \leq p <\infty$. We recall the locally uniform space $L_{ul}^p(\mathbb R^N)$ is defined as
	$$
	\left\{u\in L_{loc}^{p}\left(\mathbb R^{N}\right):\left\|u\right\|_{L_{ul}^p\left(\mathbb R^{N}\right)}:=\sup_{z\in \mathbb R^{N}}\left(\int_{B\left(z\right)}\left|u(x)\right|^{p}dx\right)^{\frac{1}{p}}<\infty\right\},
	$$
	where $B(z)$ is the open ball in $\mathbb R^N$ with center $z$ and radius $1$.	We set $L_{ul}^{\infty}(\mathbb R^N) = L^{\infty}(\mathbb R^N)$ for $p=\infty$.
	Let $BUC(\mathbb R^N)$ be the space of bounded uniformly continuous functions and
	$\mathcal{L}_{ul}^p\left(\mathbb{R}^{N}\right)$ be the closure of $BUC(\mathbb R^N)$ in the space $L_{ul}^p(\mathbb R^N)$, that is,
	$$
	\mathcal L_{ul}^{p}(\mathbb R^N): =\overline{BUC(\mathbb R^N)}^{\|\cdot \|_{L_{ul}^{p}(\mathbb R^N)}}.
	$$
	
	For $\mu \geq 0$, we recall the scalar integral equations
	\begin{align}
		&s_{\mu}(t) + \mu(l \ast s_\mu)(t)=1,\quad t \geq 0,\label{eq 2.1} \\
		&r_{\mu}(t) + \mu(l \ast r_\mu)(t)=l(t),\quad t>0.\label{eq 2.2}
	\end{align}
	For any $l \in L_{\mathrm{loc}}^1(\mathbb R_+)$, \eqref{eq 2.1} and \eqref{eq 2.2} admit unique solutions $s_\mu$ and $r_\mu \in L_{\mathrm{loc}}^1(\mathbb R_+)$, respectively, see \cite[Chapter 2, Theorem 3.1]{G. Gripenberg}.
	If $l \in L_{loc}^1(\mathbb R_+)$ such that $s_\mu$ is nonnegative and nonincreasing with respect to $t>0$ for every $\mu>0$, then $l$ is referred to as a completely positive function, see \cite[Proposition 4.5]{J. Pruss}.
	Furthermore, $(k,l)\in (\mathcal{PC})$ implies $l$ is completely positive and nonnegative, see \cite{P. Clement 81}.
	We summarize below some properties of  $s_\mu$ and $r_\mu$ from \cite{P. Clement 79, J.C. Pozo}(c.f. \cite[Lemma A.1]{X. Huamg}).
	\begin{lemma}\label{lemma 2.1}
		Assume that $(k,l) \in (\mathcal{PC})$. Then the following proposition holds.
		\begin{itemize}
			\item[(i)] $s_\mu(t)$ admits the representation $s_\mu(t) = -\int_0^\infty e^{-\mu \tau} w(t,\mathrm{d}\tau), \ t\geq 0$,
			where $-w(t, \mathrm{d}\tau)$ is a positive finite measure such that $-\widehat{w}(\lambda, \mathrm{d}\tau) = (\lambda \widehat{l}(\lambda))^{-1} e^{-\tau \widehat {l}(\lambda)^{-1}}\mathrm{d}\tau$. Moreover, $-\int_0^\infty w(t,\mathrm{d}\tau)=1$ for $t \geq 0$. 
			\item[(ii)] $r_\mu(t)$ admits the representation $r_\mu(t) = \int_0^\infty e^{-\mu \tau} \eta(t,\mathrm{d}\tau),\ t>0$,
			where $\eta(t, \mathrm{d}\tau)$
			is a positive finite measure  such that $\widehat{\eta}(\lambda, \mathrm{d}\tau) = e^{-\tau \widehat{l}(\lambda)^{-1}}\mathrm{d}\tau$. Moreover, $-\int_0^\infty \eta(t,\mathrm{d}\tau)=l(t)$ for $t>0$. 
		\end{itemize}
	\end{lemma}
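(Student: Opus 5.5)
Taking Laplace transforms of \eqref{eq 2.1} and \eqref{eq 2.2} gives
\[
\widehat{s_\mu}(\lambda)=\frac{1}{\lambda\,(1+\mu\widehat l(\lambda))}=\frac{\widehat l(\lambda)^{-1}}{\lambda\,(\widehat l(\lambda)^{-1}+\mu)},\qquad
\widehat{r_\mu}(\lambda)=\frac{\widehat l(\lambda)}{1+\mu\widehat l(\lambda)}=\frac{1}{\widehat l(\lambda)^{-1}+\mu}.
\]
The plan is to realise these two functions of $\mu$ as Laplace transforms in a new variable $\tau$, and then to invert in $\lambda$ using the complete positivity of $l$.

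\emph{Algebraic identities.} Put $\varphi(\lambda)=\widehat l(\lambda)^{-1}$. For every fixed $\lambda>0$ we have
\[
\frac{1}{\varphi(\lambda)+\mu}=\int_0^\infty e^{-\mu\tau}e^{-\tau\varphi(\lambda)}\,\mathrm d\tau .
\]
This gives (ii) at the level of Laplace transforms. Multiplying it by $\varphi(\lambda)/\lambda$ gives
\[
\widehat{s_\mu}(\lambda)=\int_0^\infty e^{-\mu\tau}\,\frac{\varphi(\lambda)}{\lambda}\,e^{-\tau\varphi(\lambda)}\,\mathrm d\tau,
\]
which is (i) with $-\widehat w(\lambda,\mathrm d\tau)=(\lambda\widehat l(\lambda))^{-1}e^{-\tau\varphi(\lambda)}\,\mathrm d\tau$.

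\emph{Inverting in $\lambda$.} The key step is to show that $\lambda\mapsto e^{-\tau\varphi(\lambda)}$ is the Laplace transform of a positive measure $\eta(\cdot,\tau)$ in $t$, depending measurably on $\tau$. Since $(k,l)\in(\mathcal{PC})$, $l$ is completely positive and $\widehat k(\lambda)=1/(\lambda\widehat l(\lambda))$, so $\varphi(\lambda)=\lambda\widehat k(\lambda)$. For a nonnegative nonincreasing $k$, $\lambda\widehat k(\lambda)$ is a Bernstein function: write $k=k(\infty)+\int_{(t,\infty)}(-\mathrm dk)$, so that
\[
\lambda\widehat k=k(\infty)+\int(1-e^{-\lambda s})(-\mathrm dk)(s).
\]
Hence $e^{-\tau\varphi}$ is completely monotone in $\lambda$, and Bernstein's theorem yields a sub-probability measure $\eta(\cdot,\tau)$ (a convolution semigroup in $\tau$). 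Likewise $\varphi/\lambda=\widehat k$ is the transform of $k\ge0$. Then $-w(t,\cdot)$ is the convolution in $t$ of $k$ with $\eta(\cdot,\tau)$, which is a positive measure. Fubini, justified by positivity, together with uniqueness of Laplace transforms identifies $s_\mu$ and $r_\mu$ with the stated integrals.

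\emph{Normalisations.} Setting $\mu=0$ gives $s_0\equiv1$ and $r_0=l$, which are the stated total masses $\int_0^\infty(-w(t,\mathrm d\tau))=1$ and $\int_0^\infty\eta(t,\mathrm d\tau)=l(t)$; the sign in (ii) is presumably a typo. The main obstacle is measure-theoretic: $\eta(t,\mathrm d\tau)$ and $w(t,\mathrm d\tau)$ are only defined as measures, and pointwise-in-$t$ identities hold only for a.e.\ $t$. To get them for every $t$, I would use the known continuity of $s_\mu$ and the right-continuity of $l$ from \cite{P. Clement 79}.
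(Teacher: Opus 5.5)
The paper does not prove this lemma. It quotes it from Cl\'ement--Nohel and Pozo--Vergara, where it is Pr\"uss's subordination construction: $\varphi=1/\widehat l$ is a Bernstein function, and $(\nu_\tau)_{\tau\ge0}$ is the convolution semigroup of sub-probability measures with $\int_0^\infty e^{-\lambda t}\nu_\tau(\mathrm dt)=e^{-\tau\varphi(\lambda)}$. One then sets $w(t,\tau)=\nu_\tau([0,t])$, which is nonincreasing in $\tau$, so $-w(t,\mathrm d\tau)$ is a positive Stieltjes measure with mass $w(t,0)-w(t,\infty)=1$ for every $t$. Your outline follows this route. Your Laplace identities and your reading of the sign in (ii) as a typo are right. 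Getting the Bernstein property directly from $\lambda\widehat k(\lambda)=k(\infty)+\int(1-e^{-\lambda s})(-\mathrm dk)(s)$ is a pleasant self-contained shortcut that never uses complete positivity of $l$; this L\'evy measure is admissible because $\int_{(0,1]}s\,(-\mathrm dk)(s)\le\int_0^1 k<\infty$.

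What is missing is the exchange of the roles of $t$ and $\tau$. Bernstein's theorem gives, for each fixed $\tau$, a measure $\nu_\tau(\mathrm dt)$ in $t$. The lemma asserts, for each fixed $t$, a finite measure in $\tau$. You call both $\eta$, and your Fubini/uniqueness step presupposes the second.

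For (i) this is quickly repaired. Since $\int_0^\infty\nu_\tau\,\mathrm d\tau$ has Laplace transform $1/\varphi=\widehat l$, it equals $l(t)\,\mathrm dt$. Define $-w(t,\mathrm d\tau):=\bigl(\int_{[0,t]}k(t-s)\,\nu_\tau(\mathrm ds)\bigr)\mathrm d\tau$. By Tonelli this is a positive measure of mass $(k\ast l)(t)=1$ for every $t>0$, with the required transform $\widehat k(\lambda)e^{-\tau\varphi(\lambda)}\,\mathrm d\tau$.

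For (ii) you need a genuine extra argument. Disintegrate $M(\mathrm dt,\mathrm d\tau)=\nu_\tau(\mathrm dt)\,\mathrm d\tau$ with respect to its $t$-marginal $l(t)\,\mathrm dt$, which is absolutely continuous precisely because $l\in L^1_{\mathrm{loc}}$ is part of $(\mathcal{PC})$. This gives $M=\eta(t,\mathrm d\tau)\,\mathrm dt$ with $\eta(t,[0,\infty))=l(t)$ for a.e.\ $t$. Fubini and uniqueness of the Laplace transform then yield $r_\mu(t)=\int_0^\infty e^{-\mu\tau}\eta(t,\mathrm d\tau)$ for a.e.\ $t$. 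Disintegration is the right tool because nothing here tells you that the individual $\nu_\tau$ have densities in $t$.

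Finally, drop the appeal to right-continuity of $l$, which is not part of $(\mathcal{PC})$; the paper even has to assume monotonicity of $l$ separately in Theorem 4.1. Since $l$ is an $L^1_{\mathrm{loc}}$ class, (ii) is an a.e.\ identity, and that is all that is used later. The every-$t$ version of (i) follows from two continuity facts. First, $s_\mu=1-\mu\,l\ast s_\mu$ is continuous. Second, the right-hand side of (i) equals $(k\ast r_\mu)(t)=1-\mu(1\ast r_\mu)(t)$, which is also continuous.
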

	
	Next, we introduce the function $Z(t,x)$ by
	\begin{align}\label{eq 2.3}
		Z(t,x)=-\int_{0}^{\infty} H(\tau, x) w(t,\mathrm{d}\tau), \quad t>0,\ x \in \mathbb R^N,
	\end{align}	
	where $H(\cdot,\cdot)$ is the Gaussian Heat Kernel.
	It follows from \cite{J.C. Pozo} that the formula of mild solutions of (\ref{1.2}) is given by
	\begin{align*}
		u(t,x)=Z(t,\cdot)\ast_x u_0+\int_{0}^{t} Y(t-\theta,\cdot)\ast_x f(u(\theta,\cdot))\, \mathrm{d}\theta,
	\end{align*}
	where the symbol $\ast_x$ represents the convolution in $\mathbb R^N$, and the function $Y$ is defined by
	\begin{align}\label{eq 2.4}
		Y(t, x)=\int_{0}^{\infty} H(\tau, x) \eta(t, \mathrm{d}\tau), \quad t>0, \ x \in \mathbb R^N.
	\end{align}
	Moreover, since $H(\tau, x)$ is nonnegative on $(0, \infty)\times \mathbb R^N$, then (\ref{eq 2.3}) and (\ref{eq 2.4}) show that $Z(t, x)$ and $Y(t, x)$ are nonnegative.
	
	\begin{remark}\label{remark 2.2}
		(\cite[Proposition 1]{J.C. Pozo}) The functions $Z(t, x)$ and $Y(t,x)$ satisfy
		\begin{align}\label{eq 2.5}
			\int_{\mathbb R^N} Z(t,x)\, \mathrm{d}x=1 ~~{\it and}~
			\int_{\mathbb R^N} Y(t,x)\, \mathrm{d}x=l(t) ~~{\it for}~ t>0.
		\end{align}
	\end{remark}
	Now, we give the definition of the mild solution of problem (\ref{1.2}).
	\begin{definition}\label{def 2.1}
		Let $T>0$. A measurable function u becomes the mild solution of (\ref{1.2}) if and only if it satisfies the following conditions:
		\begin{enumerate}
			\item[{\rm(i)}]
			$u(t) = u(t,\cdot) \in L_{ul}^1(\mathbb R^N)$ for $t \in (0,T)$.
			\item[{\rm(ii)}]
			$f(u(t))=f\big(u(t,\cdot)\big)\in L_{ul}^1(\mathbb R^N)$ for almost all $t\in (0,T)$.
			\item[{\rm(iii)}]
			For $t\in (0,T)$ and a.e. $x\in \mathbb R^N$, it holds that
			$$
			u(t,x)=Z(t,\cdot)\ast_x u_0 +\int_{0}^{t} Y(t-\theta,\cdot)\ast_x f\big(u(\theta,\cdot)\big)\, \mathrm{d}\theta.
			$$
			\item[{\rm(iv)}]
			For $t\in (0,T)$, we have
			\begin{equation}\label{eq 2.10}
				\int_{0}^{t} {\Big \|f\big(u(s)\big) \Big\|}_{L_{ul}^1(\mathbb R^N)}\, \mathrm{d}s<\infty.
			\end{equation}
			\item[{\rm(v)}]
			The initial function $u_0$ satisfies
			\begin{equation}\label{eq 2.11}
				{\big\|u(t)-Z(t,\cdot)\ast_x u_0\big\|}_{L^{\infty}(\mathbb R^N)}\to 0 {~~{\it as}~t \to 0.}
			\end{equation}
		\end{enumerate}
	\end{definition}
	
	Throughout this paper, we adopt the following conventions. For any set $Y$, let $I_i: Y \to [0,\infty)$ for $i=1,2$. We write $I_1 \lesssim I_2$ on $Y$ if there exists $C>0$ such that $I_1(z) \leq C I_2(z)$ for all $z \in Y$. For two functions $g$ and $h$, we denote $g \asymp h$ as $t \to t_0$ if there exist $C_1, C_2 > 0$ such that $C_1 g(t) \leq h(t) \leq C_2 g(t)$ near $t_0$, and $g \sim h$ as $t \to t_0$ if $\lim\limits_{t \to t_0} \frac{g(t)}{h(t)} = C$ for some $C>0$.
	
	\section{Technical tools}
	In this section, we prove several lemmas involving the estimates of the solution for (\ref{1.2}) and convex functions. We start with an introduction of the monotone iterative method.
	\begin{lemma}\label{lemma 3.1}
		Let $u_0\geq 0$, $T >0$. If $f$ satisfies $(A_f)$, and there exists a function $\psi\in L_{loc}^ {\infty}\big((0,T),L^{\infty}(\mathbb R^N) \big)$ with $\psi(t)\geq0$ satisfying
		$$
		\psi(t,x) \geq \mathcal A\left[\psi(t)\right]:=Z(t,\cdot)\ast_x u_0+\int_{0}^{t} Y(t-\theta,\cdot)\ast_x f(\psi(\theta,\cdot))\, \mathrm{d}\theta
		$$
		for $t \in (0, T)$ and $a.e.~ x \in \mathbb R^N$, then the equation $u(t)=\mathcal A\left[u(t)\right]$ has at least one solution $u$ satisfying $0\leq u(t,x)\leq \psi(t,x)$ for $t \in (0, T)$ and $a.e.~x \in \mathbb R^N$.
		\begin{proof}
			Define $\psi_1(t)=\psi_1(t,x)=Z(t,\cdot)\ast_x u_0$ and $\psi_j(t)=\psi_j(t,x)=\mathcal A\big[\psi_{j-1}(t)\big]$ for $j\in \mathbb Z_{j\geq 2}$. Since $Y$ is nonnegative, then
			$\psi_2(t,x) \geq \psi_1(t,x)$
			for  $t \in (0, T)$ and a.e. $x \in \mathbb R^N$. Assume that for $j \in \mathbb N_{j\geq 2}$,
			$$
			\psi_j(t,x)\geq \psi_{j-1}(t,x), \quad t\in (0,T), ~{ \rm a.e.}~ x\in \mathbb R^N.
			$$
			Then, since $f$ is monotonically increasing, we 
			$$
			\psi_{j+1}(t)-\psi_j(t)=\int_{0}^{t} Y(t-\theta, \cdot)\ast_x \Big[f\big(\psi_j(\theta)\big)- f\big(\psi_{j-1}(\theta)\big)\Big]\, \mathrm{d}\theta\geq 0,
			$$
			By induction, we obtain
			$$0 \leq \psi_1(t,x) \leq \cdots \leq \psi_j(t,x) \leq \cdots \leq \psi(t,x) ~~{\rm for}~ t\in (0,T),~{\rm a.e.}~x \in \mathbb R^N.
			$$
			According to the monotonic convergence principle, $\lim\limits_{j \to \infty}\psi_j(t,x)$ exists for $t \in (0,T)$ and a.e. $x \in \mathbb R^N$.
			Define $u(t,\cdot):=\lim\limits_{j\to\infty}\psi_j(t,\cdot)$, then
			$$
			\lim\limits_{j \to \infty}\mathcal A[\psi_{j-1}]=\mathcal A[u],
			$$
			hence $u=\mathcal A[u]$ and $0 \leq u(t,x) \leq \psi(t,x)$ for $t \in (0,T)$ and for $t \in (0,T)$ and a.e. $x \in \mathbb R^N$,  $x \in \mathbb R^N$.
		\end{proof}
	\end{lemma}
	
	Below we state the standard $L_{ul}^p-L_{ul}^q$ estimate of the operator $H(t,\cdot)\ast_x$, see \cite[Proposition 2.1]{J.M. Arrieta}.
	\begin{lemma}\label{lemma 3.2}
		Let $N\geq 1$, $1\leq p\leq q \leq \infty$ and $(k,l)\in (\mathcal {PC})$. Then for any $v \in L_{ul}^p(\mathbb R^N)$, one can find a constant $M>0$, depending only on $N$, such that
		\begin{equation*}
			\|H(t,\cdot)\ast_x v\|_{L_{ul}^q(\mathbb R^N)} \leq M\big[(1\ast l)(t)\big]^{-\frac{N}{2}(\frac{1}{p}- \frac{1}{q})}\|v\|_{L_{ul}^p(\mathbb R^N)}, \quad t>0.
		\end{equation*}
	\end{lemma}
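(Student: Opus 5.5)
The plan is to reduce the statement to the classical $L_{ul}^p$–$L_{ul}^q$ smoothing estimate of Arrieta et al.\ for the heat semigroup, and then convert the time variable into the scale $(1\ast l)(t)$ fixed by the kernel pair $(k,l)\in(\mathcal{PC})$. One point needs care at the outset: for $p<q$ the pure power bound cannot hold uniformly on all of $(0,\infty)$ whenever $(1\ast l)(t)\to\infty$. Indeed, $v\equiv 1$ gives $H(t,\cdot)\ast_x v\equiv 1$, whose $L_{ul}^q$ norm is a positive constant. So the proof has to locate exactly where the stated form is valid, and I will make this explicit rather than hide it.

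\textbf{Step 1 (local-in-space decomposition).} Fix $z\in\mathbb R^N$ and cover $\mathbb R^N$ by unit balls $B(z_j)$, $z_j\in\mathbb Z^N$. Write $v=\sum_j v\chi_{B(z_j)}$. On $B(z)$, apply Young's inequality with the Gaussian $H(t,\cdot)$ to each piece. The near pieces, with $|z-z_j|\le 2$, contribute $\|H(t)\|_{L^r}\|v\|_{L_{ul}^p}$, where $1+\frac1q=\frac1r+\frac1p$, and $\|H(t)\|_{L^r}\asymp t^{-\frac N2(\frac1p-\frac1q)}$. The far pieces carry the Gaussian weight $\sup_{|y|\ge |z-z_j|-2}H(t,y)$. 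Summing over $j$ gives a bound of the form $C(1+t^{-\frac N2(\frac1p-\frac1q)})$ times $\|v\|_{L_{ul}^p}$, with $C=C(N)$. This is \cite[Proposition 2.1]{J.M. Arrieta}, and I would cite it directly.

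\textbf{Step 2 (matching with $(1\ast l)(t)$).} Since $l\ge0$ is locally integrable and not identically zero, the map $t\mapsto(1\ast l)(t)$ is continuous, nondecreasing, tends to $0$ as $t\to0$, and is positive for $t>0$. In the short-time regime where $t^{-\frac N2(\frac1p-\frac1q)}$ dominates $1$, Step 1 gives the pure power $t^{-\frac N2(\frac1p-\frac1q)}$. To replace $t$ by $(1\ast l)(t)$ inside the power, I would read the estimate at the rescaled time $\tau=(1\ast l)(t)$, which is the time scale at which $H$ enters through the subordination representation of $Z$ and $Y$. The small-time part of Step 1 then gives $M\,[(1\ast l)(t)]^{-\frac N2(\frac1p-\frac1q)}$ whenever $(1\ast l)(t)\le1$. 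The case $p=q$ is immediate, since $\int H=1$ gives $\|H(t)\ast_x v\|_{L_{ul}^p}\le M\|v\|_{L_{ul}^p}$.

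\textbf{Main obstacle.} The difficulty is the large-time range, where $(1\ast l)(t)>1$ and $p<q$. The constant-function example shows that no bound of the stated pure power form, with $M$ depending only on $N$, can hold there. What I would actually establish is the following. The stated inequality holds on every interval where $(1\ast l)(t)\le T_0$, with $M=M(N,T_0)$. In particular it holds for all $t>0$ when $l\in L^1(\mathbb R_+)$ and $T_0=\|l\|_{L^1}$. On the full half-line it holds for $p=q$. This is the form in which the estimate is needed for the local existence results of Section 4, which only use $t\in(0,T)$ with $T$ small.
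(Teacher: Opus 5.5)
Your Step 1 coincides with the paper's entire proof, which is only a citation of Arrieta et al. That result gives $\|H(\tau,\cdot)\ast_x v\|_{L_{ul}^q(\mathbb R^N)}\le M(\tau^{-\delta}+1)\|v\|_{L_{ul}^p(\mathbb R^N)}$, with $\delta=\frac N2(\frac1p-\frac1q)$, in the heat time $\tau$. Your large-time objection via $v\equiv 1$ is correct.

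The gap is in Step 2. The operator in the statement is $H(t,\cdot)\ast_x$. Evaluating the heat estimate at $\tau=(1\ast l)(t)$ bounds $H((1\ast l)(t),\cdot)\ast_x$, which is a different operator. For the actual operator, your small-time claim is false whenever $p<q$, and this holds for every pair in $(\mathcal{PC})$:
\begin{itemize}
\item Since $k,l\ge 0$ and $1\ast l$ is nondecreasing, $t=(k\ast(1\ast l))(t)\le (1\ast k)(t)\,(1\ast l)(t)$. Hence $(1\ast l)(t)/t\ge 1/(1\ast k)(t)\to\infty$ as $t\to 0$, so $[(1\ast l)(t)]^{-\delta}=o(t^{-\delta})$.
\item The rate $t^{-\delta}$ is sharp for $H(t,\cdot)\ast_x$. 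Take $v=\chi_{B(0,\sqrt t)}$ with $t<1$. Then $\|v\|_{L_{ul}^p}\asymp t^{N/(2p)}$, and $H(t,\cdot)\ast_x v\ge c_N>0$ on $B(0,\sqrt t)$. So the ratio of the two norms is $\gtrsim t^{-\delta}$.
\end{itemize}
For $k=g_{1-\alpha}$, $l=g_\alpha$ you would be claiming $t^{-\alpha\delta}$ against a true $t^{-\delta}$. So the inequality you propose to establish on $\{(1\ast l)(t)\le T_0\}$ fails near $t=0$, and so does the version on all of $(0,\infty)$ for $l\in L^1(\mathbb R_+)$. Only the case $p=q$ survives.

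The missing idea is that $(1\ast l)(t)$ enters only after averaging over the subordination measure, not through a pointwise change of time. The true and usable form of the lemma is the $\tau$-estimate above, with the $+1$. In Lemma \ref{lemma 3.4} one integrates it against $-w(t,\mathrm d\tau)$, using two facts:
\begin{itemize}
\item $-\int_0^\infty w(t,\mathrm d\tau)=1$;
\item Lemma \ref{lemma 3.3}, $a_\delta(t)\le\Gamma(1-\delta)[(1\ast l)(t)]^{-\delta}$ for $\delta<1$.
\end{itemize}
This produces exactly \eqref{eq 3.1}, and it is how the paper actually uses Lemma \ref{lemma 3.2} inside the proof of Lemma \ref{lemma 3.4}, with $\tau^{-\delta}$ under the integral. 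The right repair is therefore to restate the lemma in the variable $\tau$, not to restrict the range of $t$. For $p<q$ the printed inequality is false, so neither the citation nor any other argument can yield it.
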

	
	The following lemma introduces a weighted Stieltjes integral that provides an upper bound for the $L_{ul}^p$–$L_{ul}^q$ estimate of the solution operator $Z(t,\cdot)\ast_x$. 
	It follows from \cite[Lemma 2]{J. C. Pozo 24} (see also \cite[Lemma A.2]{X. Huamg}).
	
	\begin{lemma}\label{lemma 3.3}
		Let $\delta \in [0,1)$, $(k,l)\in (\mathcal{PC})$. Then the following two functions
		\begin{align*}
			a_{\delta}(t):=-\int_0^\infty \tau^{-\delta}\, w(t, \mathrm{d} \tau), \ b_{\delta}(t):=-\int_0^\infty \tau^{-\delta}\, \eta(t, \mathrm{d} \tau), \quad t>0,
		\end{align*}
		are locally $L^1$-integrable on $\mathbb{R}+$. Their Laplace transforms are given by
		\begin{align*}
			\widehat{a}_{\delta} (\lambda)=\Gamma(1-\delta) \lambda^{-1} \widehat{l}(\lambda)^{ -\delta} , \ \widehat{b}_{\delta} (\lambda)=\Gamma(1-\delta) \widehat{l}(\lambda)^{ 1-\delta}.
		\end{align*}
		Moreover, it holds that $a_{\delta}(t)\leq \Gamma(1-\delta) [(1 \ast l)(t)]^{-\delta}$ for $t>0$, where $\Gamma(\cdot)$ denotes the Gamma function. If $l$ is also nonincreasing, then  $b_{\delta}(t)\leq \Gamma(1-\delta) l(t)[(1 \ast l)(t)]^{-\delta}$ for $t>0$.
	\end{lemma}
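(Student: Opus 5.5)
The plan is to read off the Laplace transforms from Lemma \ref{lemma 2.1} and then control the two integrals by Laplace-transform monotonicity arguments.

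\textbf{Laplace transforms.} Since $-w(t,\mathrm{d}\tau)$ and $\eta(t,\mathrm{d}\tau)$ are positive measures, Tonelli lets us exchange the $t$-Laplace transform with the $\tau$-integral. Using the identities $-\widehat w(\lambda,\mathrm{d}\tau)=(\lambda\widehat l(\lambda))^{-1}e^{-\tau/\widehat l(\lambda)}\,\mathrm{d}\tau$ and $\widehat\eta(\lambda,\mathrm{d}\tau)=e^{-\tau/\widehat l(\lambda)}\,\mathrm{d}\tau$, we get
\begin{align*}
\widehat a_\delta(\lambda)=(\lambda\widehat l(\lambda))^{-1}\int_0^\infty \tau^{-\delta}e^{-\tau/\widehat l(\lambda)}\,\mathrm{d}\tau=\Gamma(1-\delta)\lambda^{-1}\widehat l(\lambda)^{-\delta}.
\end{align*}
The computation for $b_\delta$ is identical and gives $\Gamma(1-\delta)\widehat l(\lambda)^{1-\delta}$. (For $b_\delta$ I take $\eta\ge0$, so there should be no minus sign.) Both transforms are finite for every $\lambda>0$, because $\delta<1$ makes the Gamma integral converge. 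A nonnegative function with a finite Laplace transform is locally integrable, since $\int_0^T g\le e^{\lambda T}\widehat g(\lambda)$. This gives local integrability of $a_\delta$ and $b_\delta$.

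\textbf{Pointwise bound on $a_\delta$.} I would use the representation $\tau^{-\delta}=\Gamma(\delta)^{-1}\int_0^\infty \mu^{\delta-1}e^{-\mu\tau}\,\mathrm{d}\mu$, which holds for $\delta\in(0,1)$; the case $\delta=0$ is trivial since $a_0=1$. Fubini and Lemma \ref{lemma 2.1}(i) then give
\begin{align*}
a_\delta(t)=\Gamma(\delta)^{-1}\int_0^\infty \mu^{\delta-1}s_\mu(t)\,\mathrm{d}\mu.
\end{align*}
Next I need a pointwise bound on $s_\mu(t)$. Since $s_\mu$ is nonincreasing (complete positivity), equation \eqref{eq 2.1} gives $1=s_\mu(t)+\mu(l\ast s_\mu)(t)\ge s_\mu(t)\,(1+\mu(1\ast l)(t))$. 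Hence
\begin{align*}
s_\mu(t)\le \bigl(1+\mu(1\ast l)(t)\bigr)^{-1}.
\end{align*}
Substituting this bound and computing the Beta integral,
\begin{align*}
\int_0^\infty \mu^{\delta-1}\bigl(1+\mu c\bigr)^{-1}\,\mathrm{d}\mu=c^{-\delta}\,\Gamma(\delta)\Gamma(1-\delta),
\end{align*}
with $c=(1\ast l)(t)$. This yields $a_\delta(t)\le\Gamma(1-\delta)[(1\ast l)(t)]^{-\delta}$.

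\textbf{Pointwise bound on $b_\delta$.} The same argument with $r_\mu$ gives $b_\delta(t)=\Gamma(\delta)^{-1}\int_0^\infty\mu^{\delta-1}r_\mu(t)\,\mathrm{d}\mu$. The task is then to show that, when $l$ is nonincreasing,
\begin{align*}
r_\mu(t)\le l(t)\bigl(1+\mu(1\ast l)(t)\bigr)^{-1}.
\end{align*}
This is the main obstacle. I would first try the identity $r_\mu=-\dot s_\mu/\mu$, which follows by differentiating \eqref{eq 2.1} using \eqref{eq 2.2}; equivalently $s_\mu=1-\mu(1\ast r_\mu)$. Then \eqref{eq 2.2} reads $r_\mu=l-\mu\, l\ast r_\mu$. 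Write $\ell:=l(t)$ and $R:=(1\ast r_\mu)(t)$. For the convolution term I want the lower bound
\begin{align*}
(l\ast r_\mu)(t)=\int_0^t l(t-\theta)\,r_\mu(\theta)\,\mathrm{d}\theta\ge \ell R,
\end{align*}
which uses $l(t-\theta)\ge l(t)$ from monotonicity of $l$ together with $r_\mu\ge0$. Nonnegativity of $r_\mu$ comes from $-\dot s_\mu\ge0$. This gives $r_\mu(t)\le \ell(1-\mu R)=\ell\, s_\mu(t)$. Combining with the bound on $s_\mu(t)$ from the previous step gives the desired estimate on $r_\mu$. The Beta integral then yields $b_\delta(t)\le\Gamma(1-\delta)\,l(t)[(1\ast l)(t)]^{-\delta}$.
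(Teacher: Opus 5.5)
The paper does not prove this lemma; it imports it from \cite[Lemma 2]{J. C. Pozo 24} (see also \cite[Lemma A.2]{X. Huamg}), so there is no in-text argument to compare yours against. Your proof is correct and self-contained.

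\begin{itemize}
\item The Laplace transforms follow from Lemma~\ref{lemma 2.1} by Tonelli.
\item Local integrability follows because a nonnegative function with a finite Laplace transform is locally integrable.
\item The subordination identity $\tau^{-\delta}=\Gamma(\delta)^{-1}\int_0^\infty\mu^{\delta-1}e^{-\mu\tau}\,\mathrm{d}\mu$ writes $a_\delta(t)$ and $b_\delta(t)$ as $\mu$-averages of $s_\mu(t)$ and $r_\mu(t)$.
\item The pointwise bounds then come from $s_\mu(t)\le(1+\mu(1\ast l)(t))^{-1}$ (by \eqref{eq 2.1} and complete positivity), $r_\mu(t)\le l(t)s_\mu(t)$ (by \eqref{eq 2.2}, $r_\mu\ge 0$ and monotonicity of $l$), and the Beta integral. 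This gives exactly the constant $\Gamma(1-\delta)$.
\end{itemize}

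A virtue of this route is that it shows monotonicity of $l$ enters only through $r_\mu\le l\,s_\mu$. As by-products, $a_\delta$ is nonincreasing in $t$, and $-w(t,\cdot)$ has no atom at $\tau=0$ for $t>0$.

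You are also right about the sign. $\eta(t,\mathrm{d}\tau)$ is a positive measure of mass $l(t)$, as used in \eqref{eq 2.4} and Remark~\ref{remark 2.2}. So the minus signs in the definition of $b_\delta$ and in Lemma~\ref{lemma 2.1}(ii) are typos. Your reading is the one consistent with $\widehat b_\delta(\lambda)=\Gamma(1-\delta)\widehat l(\lambda)^{1-\delta}$.

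Three small points to tighten:
\begin{itemize}
\item Establish $s_\mu=1-\mu(1\ast r_\mu)$ directly rather than by differentiating \eqref{eq 2.1}, which presupposes that $s_\mu$ is absolutely continuous. Put $v:=1-\mu(1\ast r_\mu)$ into \eqref{eq 2.1} and use \eqref{eq 2.2}: this gives $v+\mu\,l\ast v=1-\mu\,1\ast(r_\mu+\mu\,l\ast r_\mu-l)=1$, so $v=s_\mu$ by uniqueness.
\item \eqref{eq 2.2} is an identity in $L^1_{\mathrm{loc}}$, so $r_\mu(t)\le l(t)s_\mu(t)$ holds only for a.e.\ $t$, with an exceptional set depending on $\mu$. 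Restrict to rational $\mu$ and use continuity in $\mu$ of both sides (dominated convergence in the representations of Lemma~\ref{lemma 2.1}) to get a single null set for all $\mu$. This yields the $b_\delta$ bound for a.e.\ $t>0$, which is all that is needed where the bound is applied under an integral.
\item Treat $\delta=0$ for $b_\delta$ separately; it is trivial since $b_0=l$.
\end{itemize}
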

	
	Now we present the $L_{ul}^p-L_{ul}^q$ estimates of the operator $Z(t,\cdot)\ast_x$.
	\begin{lemma}\label{lemma 3.4}
		Let $N\geq 1$, $1\leq p \leq q\leq\infty$ such that $\frac{N}{2}(\frac{1}{p}- \frac{1}{q})\in [0,1)$, and $(k,l)\in (\mathcal {PC})$. Assume that $v \in L_{ul}^p(\mathbb R^N)$, then there exist constants $M_1, M_2>0$, depending only on $p$, $q$, and $N$, such that
		\begin{align}
			&\|Z(t,\cdot)\ast_x v\|_{L_{ul}^q(\mathbb R^N)} \leq
			M_1\Big(\big[(1\ast l)(t)\big]^{-\frac{N}{2}(\frac{1}{p}- \frac{1}{q})}+ 1\Big)\|v\|_{L_{ul}^p(\mathbb R^N)}, \quad t>0, \label{eq 3.1}  \\
			& \|Z(t,\cdot)\ast_x v\|_{L_{ul}^q(\mathbb R^N)} \leq M_2\big[(1\ast l)(t)\big]^{-\frac{N}{2}(\frac{1}{p}- \frac{1}{q})}\|v\|_{L_{ul}^p(\mathbb R^N)} \label{eq 3.2}
		\end{align}
		for sufficiently small $t>0$.
		\begin{proof}
			From \eqref{eq 2.3}, Lemma \ref{lemma 3.2}, and Lemma \ref{lemma 3.3}, we obtain
			\begin{align*}
				\|Z(t,\cdot)\ast_x v\|_{L_{ul}^q(\mathbb R^N)} &\leq -\int_0^\infty \|H(\tau,\cdot)\ast_x v\|_{L_{ul}^q(\mathbb R^N)}\, w(t,\mathrm{d}\tau)\\
				&\leq -M\|v\|_{L_{ul}^p(\mathbb R^N)} \int_{0}^\infty \tau^{-\frac{N}{2}(\frac{1}{p}- \frac{1}{q}) } \, w(t,\mathrm{d}\tau)\\
				&\leq M\Gamma(1 - \frac{N}{2}(\frac{1}{p}- \frac{1}{q}) )[(1\ast l)(t)]^{-\frac{N}{2}(\frac{1}{p}- \frac{1}{q})}\|v\|_{L_{ul}^p(\mathbb R^N)} \text{ ~for~ } t>0.
			\end{align*}
			Setting $M_1 = M\Gamma \left(1 - \frac{N}{2}\left(\frac{1}{p} - \frac{1}{q}\right)\right)$, we immediately obtain \eqref{eq 3.1}. Choose $T_0$ such that $(1 \ast l)(T_0) = 1$ and let $M_2 = 2M_1$. Then \eqref{eq 3.2} holds for $t \in (0, T_0]$.
		\end{proof}
	\end{lemma}
	
	Inspired by \cite[Lemma 8]{H. Brezis}, we obtain the following lemma.
	\begin{lemma}\label{lemma 3.5}
		Let $N\geq 1$, $1\leq p\leq q\leq\infty$ such that $ \frac{N}{2}(\frac{1}{p}- \frac{1}{q}) \in [0,1)$, and $(k,l)\in (\mathcal {PC})$. For every fixed $v \in \mathcal L_{ul}^p(\mathbb R^N)$, then there is a nonnegative and increasing function $C^{\ast}(t)$ for $t \in (0,1)$ with $\lim\limits_{t\to 0}C^{\ast}(t)=0$, such that
		\begin{align}\label{eq 3.3}
			\big\|Z(t,\cdot)\ast_x v \big\|_{L_{ul}^q(\mathbb R^N)}
			\leq C^{\ast}(t)\big[(1\ast l)(t) \big]^{-\frac{N}{2}(\frac{1}{p} - \frac{1}{q})}
		\end{align}
		for sufficiently small $t>0$.
		\begin{proof}
			We employ the dense approximation method. 
			Define $\varpi:(0, \infty) \rightarrow(0, \infty)$ by
			$$
			\varpi(t)=\big[(1\ast l)(t) \big]^{\frac{N}{2}(\frac{1}{p} - \frac{1}{q})}.
			$$
			From Lemma \ref{lemma 3.3}, Remark \ref{remark 2.2} and Young's inequality, we can get
			\begin{align*}
				\varpi(t) \big\|Z(t,\cdot)\ast_x v\big\|_{L_{ul}^q(\mathbb R^N)}
				\leq& \varpi(t) \big\|Z(t,\cdot)\ast_x (v-v_1)\big\|_{L_{ul}^q(\mathbb R^N)}\\
				&+ \varpi(t) \big\|Z(t,\cdot)\ast_x v_1\big\|_{L_{ul}^q(\mathbb R^N)}\\
				\lesssim& \big\| (v-v_1)\big\|_{L_{ul}^{p}(\mathbb R^N)}
				+ \varpi(t) \|v_1\|_{L^{\infty}(\mathbb R^N)}
			\end{align*}
			for $v_1 \in L^{\infty}(\mathbb R^N)$ and $t>0$. It follows that
			$$
			\limsup\limits_{t\to 0}\varpi(t) \big\|Z(t,\cdot)\ast_x v\big\|_{L_{ul}^q(\mathbb R^N)}\leq \big\| v-v_1\big\|_{L_{ul}^{p}(\mathbb R^N)}$$
			for $v_1 \in L^{\infty}(\mathbb R^N)$. By the arbitrariness of $v_1$, we can get (\ref{eq 3.3}).
		\end{proof}
	\end{lemma}
	
	\begin{lemma}\label{lemma 3.6}
		Let $K:[C, \infty) \to [0, \infty)$ be a convex function for $C \geq 0$. If $v\in L_{ul}^1(\mathbb R^N)$, $v>C$ and $K(v)\in L_{ul}^1(\mathbb R^N)$, then
		\begin{align}\label{eq 3.4}
			K\big(Z(t,\cdot)\ast_x v\big) \leq Z(t,\cdot)\ast_x K(v)
		\end{align}
		and
		\begin{align}\label{eq 3.5}
			K\left(\frac{Y(t,\cdot)\ast_x v}{l(t)}\right) \leq \frac{Y(t,\cdot)\ast_x K(v)}{l(t)}
		\end{align}
		in $(0,\infty)\times \mathbb R^N$.
		\begin{proof}
			Let $C=0$. From Jensen's integral inequality and {Remark \ref{remark 2.2}}, we obtain
			\begin{align*}
				K\big(Z(t,\cdot)\ast_x v\big)
				=K\left(\int_{\mathbb R^N} Z(t, x-y) v(y)\, \mathrm{d}y \right)
				\leq Z(t,\cdot)\ast_x K(v).
			\end{align*}
			Assume $C> 0$, we put $\mathring{K}(v):=K(v+C)$ and $\mathring{v}:=v-C$. {It is easy to know that} $\mathring{K}:[C, \infty] \to [0, \infty)$
			is a convex function and $\mathring{v}\in L_{ul}^1(\mathbb R^N)$ satisfies $\mathring{v} \geq 0$, $K(\mathring{v})\in L_{ul}^1(\mathbb R^N)$. Then from the above inequality we deduce that
			$$
			\mathring K\big(Z(t,\cdot)\ast_x \mathring v\big) \leq Z(t,\cdot)\ast_x \mathring K(\mathring v),
			$$
			which yields
			\begin{align*}
				K\big(Z(t,\cdot)\ast_x (v-C)+ C\big) \leq Z(t,\cdot)\ast_x K(v).
			\end{align*}
			This along with $Z(t, \cdot)\ast_x C=C$ results in (\ref{eq 3.4}). In the same way, (\ref{eq 3.5}) can be proved. The proof is completed.
		\end{proof}
	\end{lemma}
	
	\section{Main Results}
	In this section, we give the local existence and nonexistence about the mild solution of problem (\ref{1.2}) with the kernels of type $(\mathcal{PC})$.
	\subsection{Existence}	
	Firstly, we present the existence result in the subcritical case $\rho > \frac{N}{2}$.
	\begin{theorem}\label{the 4.1}
		Let $N\geq 1$, $(k,l)\in (\mathcal {PC})$ and $u_0 \geq 0$. Suppose that $l$ is nonincreasing and $f$ satisfies $(A_f)$. If ${G(u_0)}^{-\rho}\in L_{ul}^1(\mathbb R^N)$ for some $\rho > \frac{N}{2}$, then (\ref{1.2}) admits a  nonnegative mild solution on the interval $[0, T)$ for a certain constant $T>0$.		
		\begin{proof}
			Take $c>\max\{\rho, 1\}$ to be a constant and define $K(u):={G(u)}^{-\frac{\rho}{c}}$. In what follows, we always take $C>0$ large enough.
			
			From $\lim\limits_{u \to \infty} f'(u)G(u)=1$ we obtain that the function $K(u)$ is convex and $f(u )/K(u)$ is nondecreasing when $u\geq C$. Moreover, $K^{-1}(u)$ is a concave function, $(K^{-1})(u )$ is increasing and $(K^{-1})'(u )$ is nonincreasing for $u \geq K(C)$.	
			Let $u_1(x):=\mathrm{max} \{u_0(x),C\}$, then $K(u_1)\in L_{ul}^c(\mathbb R^N)$. We define
			\begin{align*}
				\varphi(t):=K^{-1}\big((\delta+1)Z(t,\cdot)\ast_x K(u_1) \big),
			\end{align*}
			where $\delta >0$ is a constant.
			By Remark \ref{remark 2.2}, Lemma \ref{lemma 3.4} and the increasing property of $K^{-1}$ on $[K(C), \infty)$, we have
			\begin{equation}\label{eq 4.1}
				\begin{aligned}
					C&\leq
					\varphi(t) \\
					&\leq K^{-1}\Big((\delta+1)\big\|Z(t,\cdot)\ast_x K(u_1)\big\|_{L^{\infty}(\mathbb R^N)} \Big)\\
					&\leq K^{-1}\Big(M_2(\delta +1)\big[(1\ast l)(t) \big]^{-\frac{N}{2c}}\big\|K(u_1) \big\|_{L_{ul}^c(\mathbb R^N)} \Big)
					< \infty~~{\rm for}~ t\in(0,T_1),
				\end{aligned}
			\end{equation}
			which implies {$\varphi\in L_{loc}^{\infty}\big((0,T_1), L^{\infty}(\mathbb R^N) \big)$}, where $T_1$ is sufficiently small.  
			Then from Lemma \ref{lemma 3.6}, the monotonicity of $(K^{-1})'$ on $[K(C),\infty)$ and the mean value theorem, we obtain
			\begin{equation}\label{eq 4.2}
				\begin{aligned}
					\varphi(t) - Z(t,\cdot)\ast_x u_0
					&\geq \varphi(t) - Z(t,\cdot)\ast_x u_1\\
					&\geq \varphi(t) -K^{-1}\big(Z(t,\cdot)\ast_x K(u_1) \big)\\
					&\geq \frac{\delta Z(t,\cdot)\ast_x K(u_1)}{K'\Big( K^{-1}\big((\delta + 1)Z(t,\cdot)\ast_x K(u_1)\big)\Big)}\\
					&=\frac{\delta}{\delta + 1} \frac{K\big(\varphi(t)\big)}{K'\big(\varphi(t)\big)}\\
					&=\frac{c\delta}{\rho(\delta +1)} f\big( \varphi(t)\big) G\big(\varphi(t) \big).
				\end{aligned}
			\end{equation}
			
			We notice the following estimate holds:
			\begin{equation}
				\begin{aligned}\label{eq 4.3}
					&\quad \int_{0}^{t} Y(t-\theta,\cdot)\ast_x f\big(\varphi(\theta)\big)\,
					\mathrm{d}\theta \\
					&\leq \int_{0}^{t} \bigg\|\frac{f\big(\varphi(\theta) \big)}{K\big(\varphi(\theta) \big)}\bigg\|_{L^{\infty}(\mathbb R^N)}Y(t-\theta,\cdot)\ast_x K\big(\varphi(\theta)\big)\,
					\mathrm{d}\theta ~~{\rm for}~ t\in(0,T_1).	    	
				\end{aligned}
			\end{equation}
			From (\ref{eq 4.1}) we deduce that
			\begin{equation}\label{eq 4.4}
				\begin{aligned}
					C&\leq {\varphi(\theta)} \leq K^{-1}\Big((\delta +1)\big\|Z(\theta,\cdot)\ast_x K(u_1) \big\|_{L^{\infty}(\mathbb R^N)}\Big)\\			
					&\leq K^{-1}\Big(M_2(\delta +1)\big[(1\ast l)(\theta) \big]^{-\frac{N}{2c}}\big\|K(u_1) \big\|_{L_{ul}^c(\mathbb R^N)} \Big):=\phi(\theta), \quad 0<\theta<t<T_1.
				\end{aligned}
			\end{equation}
			Due to $f(u)/K(u)$ is nondecreasing for sufficiently large $u$, we obtain			
			\begin{align*}
				\bigg\|\frac{f\big(\varphi(\theta) \big)}{K\big(\varphi(\theta) \big)}\bigg\|_{L^{\infty}(\mathbb R^N)}
				\leq \frac{f\big(\phi(\theta) \big)}{K\big(\phi(\theta) \big)}, \quad 0<\theta<t<T_1.		
			\end{align*}
			This along with (\ref{eq 4.3}), Young's inequality, Lemma \ref{lemma 3.4} and Remark \ref{remark 2.2} gives
			\begin{equation}\label{eq 4.5}
				\begin{aligned}
					&\quad \int_{0}^{t} Y(t-\theta,\cdot)\ast_x f\big(\varphi(\theta)\big)\,
					\mathrm{d}\theta\\
					&\leq \int_{0}^{t} \frac{f\big(\phi(\theta) \big)}{K\big(\phi(\theta) \big)} (\delta +1) Y(t-\theta,\cdot)\ast_x Z(\theta,\cdot) \ast_x K(u_1)\, \mathrm{d}\theta\\
					&\leq \int_{0}^{t} \frac{f\big(\phi(\theta) \big)}{K\big(\phi(\theta) \big)} (\delta +1) \left\|Y(t-\theta,\cdot)\ast_x Z(\theta,\cdot) \ast_x K(u_1)\right\|_{L^{\infty}(\mathbb R^N)} \, \mathrm{d}\theta \\
					&\leq \int_{0}^{t} \frac{f\big(\phi(\theta) \big)}{K\big(\phi(\theta) \big)} (\delta +1) l(t-\theta) \big\|Z(\theta,\cdot)\ast_x K(u_1) \big\|_{L^{\infty}(\mathbb R^N)}\, \mathrm{d}\theta\\
					&\leq \int_{0}^{t} \frac{f\big(\phi(\theta) \big)}{K\big(\phi(\theta) \big)} l(t-\theta) M_2(\delta +1) \big[(1\ast l)(\theta) \big]^{-\frac{N}{2c}} \big\|K(u_1) \big\|_{L_{ul}^c(\mathbb R^N)}\, \mathrm{d}\theta\\
					&\leq \int_{0}^{t} l(t-\theta) f\big( \phi(\theta)\big)\, \mathrm{d}\theta.
				\end{aligned}
			\end{equation}
			Take $\kappa>0$ with $0<{N(\kappa+1)}/{2\rho}<1$. The condition $\lim\limits_{u \to 0} f'(u)G(u)=1$ implies that $f(u)\big[G(u)\big]^{\kappa+1}$ is decreasing for $u\geq C$. Moreover, observe that $C\leq \varphi(t) \leq \phi(t) \leq \phi(\theta)$ for the decreasing property of $\phi$, then we get
			\begin{align*}
				&\quad \int_{0}^{t} l(t-\theta)f\big(\phi(\theta)\big)\, \mathrm{d}\theta\\
				&= \int_{0}^{t} l(t-\theta)f\big(\phi(\theta)\big) \Big[ G\big(\phi(\theta)\big)\Big]^{\kappa+1} \Big[ G\big(\phi(\theta)\big)\Big]^{-\kappa-1}\, \mathrm{d}\theta\\
				&\leq f\big(\varphi(t) \big) G\big(\varphi(t) \big) \big[G(C) \big]^{\kappa} \int_{0}^{t} l(t-\theta) \Big[ G\big(\phi(\theta)\big)\Big]^{-\kappa-1}\,\mathrm{d}\theta.
			\end{align*}
			It follows from Lemma \ref{lemma 3.4} that
			\begin{equation*}
				\begin{aligned}
					&\quad \int_{0}^{t} l(t-\theta) \Big[ G\big(\phi(\theta)\big)\Big]^{-\kappa-1}\, \mathrm{d}\theta\\
					&\leq \int_{0}^{t} l(t-\theta) \Big(M_2 \big[(1\ast l)(\theta) \big]^{-\frac{N}{2c}} \big\|K(u_1) \big\|_{L_{ul}^c(\mathbb R^N)} \Big)^{\frac{c(\kappa+1)}{\rho}}\, \mathrm{d}\theta\\
					&=\Big(M_2 \big\|K(u_1) \big\|_{L_{ul}^c(\mathbb R^N)} \Big)^{\frac{c(\kappa+1)}{\rho}} \int_{0}^{t}l(t-\theta)\big[(1\ast l)(\theta) \big]^{-\frac{N(\kappa+1)}{2\rho}}\, \mathrm{d}\theta\\
					&\lesssim \big[(1\ast l)(t) \big]^{1-\frac{N(\kappa+1)}{2\rho}}\to 0 ~~{\rm as}~ t \to 0.	
				\end{aligned}
			\end{equation*}
			Here, we employ the fact that the nonincreasing and nonnegative property of $l$ implies
			\begin{align}\label{eq 4.6}
				\begin{aligned}
					&\quad \int_{0}^{t}l(t-\theta)\big[(1\ast l)(\theta) \big]^{-\frac{N(\kappa+1)}{2\rho}}\, \mathrm{d}\theta\\
					&\leq \int_{0}^{\frac{t}{2}}l(\theta)\big[(1\ast l)(\theta) \big]^{-\frac{N(\kappa+1)}{2\rho}}\, \mathrm{d}\theta+ \big[(1\ast l)(t) \big]^{-\frac{N(\kappa+1)}{2\rho}}\int_{\frac{t}{2}}^{t}l(t-\theta)\, \mathrm{d}\theta\\
					&\lesssim \big[(1\ast l)(t) \big]^{1-\frac{N(\kappa+1)}{2\rho}}. 
				\end{aligned}
			\end{align}
			Therefore, there exists a sufficiently small $T\in (0,T_1)$ such that
			\begin{align}\label{eq 4.7}
				\varphi(t)-Z(t,\cdot)\ast_x u_0\geq \int_{0}^{t} Y(\theta,\cdot)\ast_x f\big(\varphi(\theta) \big)\, \mathrm{d}\theta, \quad t\in (0,T).
			\end{align}	
			In view of Lemma \ref{lemma 3.1}, there exists $u$ such that $u(t)=\mathcal A\big[u(t) \big]$ with $0\leq u(t) \leq \varphi(t)$ for $t\in (0,T)$ and a.e. $x \in \mathbb R^N$. It follows from $\varphi(t) \in L_{loc}^{\infty}\big((0,T), L^{\infty}(\mathbb R^N) \big)$ that
			$$
			u(t)\in L_{ul}^{1}(\mathbb R^N)~{\rm and}~ f\big(u(t)\big) \in L_{ul}^{1}(\mathbb R^N), \quad t\in (0,T).
			$$
			Moreover, consider $u=K^{-1}(v)$ with $v\geq K(C)$. Since $f(u)\big[G(u) \big]^{\kappa+1}$ is decreasing, then
			$$
			f\big(K^{-1}(v) \big)\lesssim v^{\frac{c(\kappa+1)}{\rho}}, \quad v\geq K(C),
			$$
			which implies
			\begin{align*}
				f\big(u(\theta)\big) \leq f(\varphi(\theta)) \lesssim \big( Z(\theta,\cdot)\ast_x K(u_1)\big)^{\frac{c(\kappa+1)}{\rho}},\quad 0<\theta<t<T.
			\end{align*}
			This along with Lemma \ref{lemma 3.4}, \eqref{eq 4.6}, and $k \ast l \equiv 1$ yields that for $t \in (0,T)$, it holds
			\begin{align*}
				\int_{0}^{t} \Big\|f\big(u(\theta)\big) \Big\|_{L_{ul}^1(\mathbb R^N)}\, \mathrm{d}\theta
				&\lesssim \int_{0}^{t} \Big\|(Z(\theta,\cdot)\ast_x K(u_1))^{\frac{c(\kappa+1)}{\rho}} \Big\|_{L_{ul}^{1}(\mathbb R^N)}\, \mathrm{d}\theta\\
				&\lesssim \int_{0}^{t} \Big\|(Z(\theta,\cdot)\ast_x K(u_1))^{\frac{c(\kappa+1)}{\rho}} \Big\|^{\frac{c(\kappa+1)}{\rho}}_{L_{ul}^{\frac{c(\kappa+1)}{\rho}}(\mathbb R^N)}\, \mathrm{d}\theta\\
				&\lesssim \int_{0}^{t} \Big\|(Z(\theta,\cdot)\ast_x K(u_1))^{\frac{c(\kappa+1)}{\rho}} \Big\|^{\frac{c(\kappa+1)}{\rho}}_{L^{\infty}(\mathbb R^N)}\, \mathrm{d}\theta\\
				&\lesssim \int_{0}^{t} \big[(1\ast l)(\theta) \big]^{-\frac{N(\kappa+1)}{2\rho}}\, \mathrm{d}\theta\\
				&\lesssim (k\ast l \ast((1\ast l)^{-\frac{N(\kappa+1)}{2\rho}} ))(t)\\
				&\lesssim (1\ast k)(T)\big[(1\ast l)(T) \big]^{1-\frac{N(\kappa+1)}{2\rho}} < \infty.
			\end{align*}
			
			Finally, we obtain that
			\begin{align*}
				\big\|u(t,\cdot)- Z(t,\cdot)\ast_x u_0\big\|_{L^{\infty}(\mathbb R^N)}
				&\leq \Big\|\int_{0}^{t} Y(t-\theta,\cdot)\ast_x f\big(\varphi(\theta)\big)\, \mathrm{d}\theta \Big\|_{L^{\infty}(\mathbb R^N)}\\
				&\leq \int_{0}^{t} l(t-\theta) f\big( \phi(\theta)\big)\, \mathrm{d}\theta\\
				&\leq f(C) \big[G(C) \big]^{\kappa+1} \int_{0}^{t} l(t-\theta) \Big[ G\big(\phi(\theta)\big)\Big]^{-\kappa-1}\,\mathrm{d}\theta\\
				&\lesssim \big[(1\ast l)(t) \big]^{1-\frac{N(\kappa+1)}{2\rho}}\to 0 ~~{\rm as}~ t \to 0.
			\end{align*}
			The proof is completed.	
		\end{proof}	
	\end{theorem}
	For the critical case $\rho = \frac{N}{2}$, we have the following result.
	\begin{theorem}\label{the 4.2}
		Let $N\geq 1$, $\rho = \frac{N}{2}$, $(k,l)\in (\mathcal {PC})$ and $u_0 \geq 0$. Suppose that $f$ satisfies $(A_f)$. If the following (i), (ii) and (iii) hold:
		\begin{enumerate}
			\item[\rm (i)]
			${G(u_0)}^{-\rho}\in \mathcal L_{ul}^1(\mathbb R^N)$.
			\item[\rm (ii)]
			$f'(u)G(u)\leq 1$ for sufficiently large $u>0$.
			\item[\rm (iii)]
			There exists a constant $T_2>0$ such that
			\begin{align}\label{eq 4.8}
				\sup_{t\in(0, T_2)}\int_{0}^{t}l(t-\theta)\big[(1\ast l)(\theta) \big]^{-1}\, \mathrm{d}\theta <\infty.
			\end{align}
		\end{enumerate}
		Then \eqref{1.2} admits a nonnegative mild solution on the interval $[0, T)$ for some $T > 0$.
	\end{theorem}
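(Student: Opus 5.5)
We follow the scheme of Theorem \ref{the 4.1}, building a supersolution and then applying Lemma \ref{lemma 3.1}. In the critical case $\rho=\frac N2$, the small factor $[(1\ast l)(t)]^{1-\frac{N(\kappa+1)}{2\rho}}$ is no longer available. We replace it by the smallness coming from Lemma \ref{lemma 3.5}, which uses hypothesis (i), together with the uniform bound \eqref{eq 4.8}.

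\textbf{Construction of the supersolution.} Take $c>\max\{\rho,1\}$ and $K(u)=G(u)^{-\rho/c}$ on $[C,\infty)$ with $C$ large, and set $u_1=\max\{u_0,C\}$. Since $K(u_1)^c=G(u_1)^{-\rho}$ and $G(u_0)^{-\rho}\in\mathcal L^1_{ul}$, we have $K(u_1)\in\mathcal L^c_{ul}(\mathbb R^N)$; replacing $u_0$ by the truncation $u_1$ preserves membership in the closure of $BUC$. Define
\[
\varphi(t)=K^{-1}\big((\delta+1)Z(t,\cdot)\ast_x K(u_1)\big).
\]
Exactly as in \eqref{eq 4.2}, Lemma \ref{lemma 3.6} and the mean value theorem give
\[
\varphi-Z\ast u_0\ge \tfrac{c\delta}{\rho(\delta+1)} f(\varphi)G(\varphi).
\]
Apply Lemma \ref{lemma 3.5} with $p=c$, $q=\infty$; the exponent is $\frac N{2c}<1$. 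This yields
\[
\|Z(\theta)\ast K(u_1)\|_\infty\le C^\ast(\theta)[(1\ast l)(\theta)]^{-\frac N{2c}}
\]
with $C^\ast(\theta)\to0$. Hence $\varphi\le\phi(\theta):=K^{-1}\big((\delta+1)C^\ast(\theta)[(1\ast l)(\theta)]^{-N/2c}\big)$, and $\varphi\in L^\infty_{loc}((0,T_1),L^\infty)$.

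\textbf{Estimate of the Duhamel term.} As in \eqref{eq 4.5}, using $\int Y=l$, that $f/K$ is nondecreasing, and $\|Z\ast K(u_1)\|_\infty\le C^\ast[(1\ast l)]^{-N/2c}$, we obtain
\[
\int_0^tY(t-\theta)\ast f(\varphi(\theta))\,d\theta\le\int_0^t l(t-\theta)f(\phi(\theta))\,d\theta.
\]
Here we use (ii) instead of a $\kappa>0$. Since $f'G\le1$, we have $(fG)'=f'G-1\le0$, so $fG$ is nonincreasing on $[C,\infty)$. Because $\varphi(t)\le\phi(\theta)$ needs care (see below), write
\[
f(\phi(\theta))=f(\phi)G(\phi)\,G(\phi(\theta))^{-1}.
\]
With $G(\phi)^{-1}=K(\phi)^{c/\rho}$, the factor $G(\phi(\theta))^{-1}$ equals
\[
\big((\delta+1)C^\ast(\theta)\big)^{c/\rho}[(1\ast l)(\theta)]^{-N/(2\rho)}=\big((\delta+1)C^\ast(\theta)\big)^{c/\rho}[(1\ast l)(\theta)]^{-1},
\]
using $\rho=\frac N2$. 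Therefore
\[
\int_0^tl(t-\theta)f(\phi(\theta))\,d\theta\le f(\varphi(t))G(\varphi(t))\,\big((\delta+1)C^\ast(t)\big)^{c/\rho}\int_0^t l(t-\theta)[(1\ast l)(\theta)]^{-1}d\theta.
\]
This step uses that $C^\ast$ is increasing, $fG$ is nonincreasing, and $\varphi(t)\le\phi(\theta)$ for $\theta<t$. By \eqref{eq 4.8}, the last integral is bounded, and $C^\ast(t)\to0$. So for small $T$ the right side is at most $\frac{c\delta}{\rho(\delta+1)}f(\varphi(t))G(\varphi(t))$, which gives the supersolution inequality \eqref{eq 4.7}. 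Lemma \ref{lemma 3.1} then produces a solution $0\le u\le\varphi$.

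\textbf{Main obstacle.} The delicate point is the monotonicity $\varphi(t)\le\phi(\theta)$ for $\theta<t$. This requires $\theta\mapsto C^\ast(\theta)[(1\ast l)(\theta)]^{-N/2c}$ to be nonincreasing near $0$, which is not automatic because $C^\ast$ increases.

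First approach: replace $\phi$ by the nonincreasing majorant
\[
\tilde\phi(\theta)=\sup_{s\ge\theta}\big(\text{the same expression at }s\big),
\]
and bound the resulting factor by $\sup_{s\le t}C^\ast(s)=C^\ast(t)$. If that fails, avoid comparing $\varphi(t)$ with $\phi(\theta)$ altogether by bounding $f(\phi)G(\phi)\le f(C)G(C)$. This yields
\[
\|u(t)-Z\ast u_0\|_\infty\lesssim C^\ast(t)^{c/\rho}\to0,
\]
which proves condition (v) of Definition \ref{def 2.1}. For the supersolution inequality itself, apply the pointwise bound with $\phi$ built from $\sup_{s\le t}C^\ast(s)$.

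\textbf{Remaining conditions.} Condition (iv) follows from
\[
f(u)\le f(\varphi)\lesssim C^\ast(\theta)^{c/\rho}[(1\ast l)(\theta)]^{-1}.
\]
Its time integral is finite by writing $1=k\ast l$ and using \eqref{eq 4.8}:
\[
\int_0^t[(1\ast l)]^{-1}d\theta=\big(k\ast(l\ast(1\ast l)^{-1})\big)(t)\le(1\ast k)(t)\sup_{s\le t}\int_0^s l(s-\theta)[(1\ast l)(\theta)]^{-1}d\theta.
\]
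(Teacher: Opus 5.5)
Your proposal is correct and follows the paper's proof essentially step for step: the same supersolution $\varphi=K^{-1}((\delta+1)Z(t,\cdot)\ast_x K(u_1))$, the vanishing factor $C^\ast(t)$ from Lemma \ref{lemma 3.5}, hypothesis (ii) to make $fG$ nonincreasing, \eqref{eq 4.8} to bound the remaining integral, and $1=k\ast l$ for condition (iv). Your repair of the comparison $\varphi(t)\le\phi(\theta)$ is valid: for $\theta\le t$ you freeze $C^\ast(\theta)$ at $C^\ast(t)$, so the majorant $K^{-1}\big((\delta+1)C^\ast(t)[(1\ast l)(\theta)]^{-N/2c}\big)$ is nonincreasing in $\theta$, and this supplies a step that the paper uses without justification.
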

	\begin{proof}
		We continue to use the definitions of $u_1$, $K$ and $\varphi$ in the proof of Theorem \ref{the 4.1}. By \cite[Lemma 2.5]{M. Suzuki 19} we get $K(u_1)\in \mathcal L_{ul}^c(\mathbb R^N)$. It follows from Lemma \ref{lemma 3.5} that there is a nonnegative and increasing function $C^{\ast}(t)$ for $t \in (0,1)$ with $\lim\limits_{t\to 0}C^{\ast}(t)=0$, such that
		\begin{align*}
			\big\|Z(t,\cdot)\ast_x K(u_1) \big\|_{L^{\infty}(\mathbb R^N)}
			\leq C^{\ast}(t)\big[(1\ast l)(t) \big]^{-\frac{N}{2c}} ~~{\rm for}~ t \in (0,1).
		\end{align*}
		{	Let $t$  be sufficiently small, then we obtain
			\begin{align*}
				C
				&\leq \varphi(\theta) \leq K^{-1}\Big((\delta +1)C^{\ast}(\theta)\big [(1\ast l)(\theta) \big]^{-\frac{N}{2c}} \Big):=\phi(\theta) ~~{\rm for}~ \theta\in (0,t).
		\end{align*}}
		Similar to  (\ref{eq 4.5}), it holds that
		\begin{equation*}
			\begin{aligned}
				&\quad \int_{0}^{t} Y(t-\theta,\cdot)\ast_x f\big(\varphi(\theta)\big)\,
				\mathrm{d}\theta\\
				&\leq \int_{0}^{t} \frac{f\big(\phi(\theta) \big)}{K\big(\phi(\theta) \big)} l(t-\theta) (\delta +1)C^{\ast}(\theta) \big[(1\ast l)(\theta) \big]^{-\frac{N}{2c}}\, \mathrm{d}\theta\\
				&\leq \int_{0}^{t} l(t-\theta) f\big( \phi(\theta)\big)\, \mathrm{d}\theta.	
			\end{aligned}
		\end{equation*}
		Since $f'(u)G(u)\leq 1$ for large $u>0$, then $f(u)G(u)$ is nonincreasing on $[C, \infty)$. The monotonicity of $C^{\ast}(t)$ , $\lim\limits_{t\to 0}C^{\ast}(t)=0$ and (\ref{eq 4.8}) give
		\begin{align*}
			&\quad \int_{0}^{t} Y(t-\theta,\cdot)\ast_x f\big(\varphi(\theta)\big)\,
			\mathrm{d}\theta\\
			&\leq f\big(\varphi (t)\big)G\big(\varphi (t)\big) \int_{0}^{t} l(t-\theta) \Big[ G\big(\phi(\theta)\big)\Big]^{-1}\, \mathrm{d}\theta\\
			&\lesssim f\big(\varphi (t)\big)G\big(\varphi (t)\big)C^{\ast}(t)\int_{0}^{t} l(t-\theta) \big[ (1*l)(\theta)\big]^{-1}\, \mathrm{d}\theta\\
			&\lesssim C^{\ast}(t)\to 0 ~~{\rm as}~t \to 0,		
		\end{align*}
		which indicates (\ref{eq 4.7}) holds for
		sufficiently small $T>0$. Similarly, there exists a function $u$ such that $u(t)=\mathcal A\big[u(t) \big]$ for $t \in (0,T)$ and a.e. $x \in \mathbb R^N$, $u(t)\in L_{ul}^{1}(\mathbb R^N)$ and $f\big(u(t)\big) \in L_{ul}^{1}(\mathbb R^N)$ for $t \in (0,T)$.	
		
		Moreover, consider $u=K^{-1}(v)$ with $v\geq K(C)$. Since $f(u)G(u)$ is decreasing, then the following inequality holds:
		$$
		f\big(K^{-1}(v) \big)\lesssim v^{\frac{c}{\rho}}, \quad v\geq K(C),
		$$
		which implies
		\begin{align*}
			f\big(u(s)\big) \leq f(\varphi(s)) \lesssim \big( Z(t,\cdot)\ast_x K(u_1)\big)^{\frac{c}{\rho}},\quad s\in(0,t).
		\end{align*}
		Similar to the proof of Theorem \ref{the 4.1}, we have
		\begin{align*}
			\int_{0}^{t} \Big\|f\big(u(\theta)\big) \Big\|_{L_{ul}^1(\mathbb R^N)}\, \mathrm{d}\theta
			&\lesssim \int_{0}^{t} \Big\|\big(Z(\theta,\cdot)\ast_x K(u_1)\big)^{\frac{c}{\rho}} \Big\|_{L_{ul}^{1}(\mathbb R^N)}\, \mathrm{d}\theta\\
			&\lesssim \int_{0}^{t} \big\|Z(\theta,\cdot)\ast_x K(u_1) \big\|^{\frac{c}{\rho}}_{L_{ul}^{\frac{c}{\rho}}(\mathbb R^N)}\, \mathrm{d}\theta\\
			&\lesssim \int_{0}^{t} \big\|Z(\theta,\cdot)\ast_x K(u_1) \big\|^{\frac{c}{\rho}}_{L^{\infty}(\mathbb R^N)}\, \mathrm{d}\theta\\
			&\lesssim \big[C^{\ast}(t)\big]^{\frac{2c}{N}}\int_{0}^{t} \big[(1\ast l)(\theta) \big]^{-1}\, \mathrm{d}\theta \\
			&\lesssim \big[C^{\ast}(t)\big]^{\frac{2c}{N}}(k \ast l \ast (1\ast l)^{-1})(t)\, \\
			&\lesssim \big[C^{\ast}(T)\big]^{\frac{2c}{N}}(1\ast k)(T)< \infty ~~{\rm for}~ t \in (0,T),
		\end{align*}
		and $\big\|u(t,\cdot)- Z(t,\cdot)\ast_x u_0\big\|_{L^{\infty}(\mathbb R^N)}\lesssim C^{\ast}(t)\to 0 ~~{\rm as}~t \to 0$.
	\end{proof}
	
	\begin{remark}\label{rem 4.1}
		Condition (iii) of Theorem \ref{the 4.2} holds for many, but not all, pairs $(k,l) \in (\mathcal{PC})$. To demonstrate this, Section 5 provides two examples of pairs $(k,l) \in (\mathcal{PC})$ that violate this condition.
	\end{remark}
	
	In addition, we point out that the following estimate
	\begin{align}\label{eq 4.9}
		[(1\ast l)(t)]^{-1}\lesssim k(t) ~~{\rm as}~ t \to 0
	\end{align}
	holds for {some pairs} $(k,l)\in (\mathcal {PC})$. {Let $0\leq \nu<1$, it follows
		\begin{align*}
			\int_{0}^{t}l(t-\theta)\big[(1\ast l)(\theta) \big]^{\nu-1}\, \mathrm{d}\theta \lesssim \left[(1\ast l)(t)\right]^\nu \int_{0}^{t} l(t-\theta)k(\theta)\, \mathrm{d}\theta \leq [(1\ast l)(t)]^{\nu}
		\end{align*}
		and
		\begin{align*}
			\int_{0}^{t}\big[(1\ast l)(\theta) \big]^{\nu-1}\, \mathrm{d}\theta \lesssim \big[(1\ast l)(t)\big]^{\nu}(1 \ast k)(t)
		\end{align*}
		for sufficiently small $t>0$.} Likewise, we have the following corollary.
	\begin{corollary}\label{cor 4.2}
		Let $N\geq 1$, $(k,l)\in (\mathcal {PC})$ and $u_0 \geq 0$. Suppose that (\ref{eq 4.8}) holds, and $f$ satisfies $(A_f)$. 
		If any of the following conditions is satisfied:
		\begin{enumerate}
			\item[\rm (i)]
			${G(u_0)}^{-\rho}\in L_{ul}^1(\mathbb R^N)$ for $\rho > \frac{N}{2}$.
			\item[\rm (ii)]
			${G(u_0)}^{-\rho}\in \mathcal L_{ul}^1(\mathbb R^N)$ for $\rho = \frac{N}{2}$ and
			$f'(u)G(u)\leq 1$ for $u>0$.
		\end{enumerate}
		Then \eqref{1.2} admits a nonnegative mild solution on the interval $[0, T)$ for some $T > 0$.
	\end{corollary}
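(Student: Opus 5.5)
The plan is to run the supersolution construction from Theorem \ref{the 4.1} and Theorem \ref{the 4.2} again, but with the role of ``$l$ nonincreasing'' taken over by the standing assumption (\ref{eq 4.8}) together with the remarks preceding the corollary. As before, set $K(u)=G(u)^{-\rho/c}$ with $c>\max\{\rho,1\}$, $u_1=\max\{u_0,C\}$ and $\varphi(t)=K^{-1}\big((\delta+1)Z(t,\cdot)\ast_x K(u_1)\big)$. The first part of the argument uses neither monotonicity of $l$ nor (\ref{eq 4.8}), so it carries over verbatim for both (i) and (ii):
\begin{itemize}
\item the lower bound (\ref{eq 4.2}), $\varphi-Z\ast_x u_0\ge \frac{c\delta}{\rho(\delta+1)}f(\varphi)G(\varphi)$, which comes from Lemma \ref{lemma 3.6} and the mean value theorem;
\item the reduction (\ref{eq 4.5}) of the Duhamel term to $\int_0^t l(t-\theta)f(\phi(\theta))\,\mathrm d\theta$, which uses only Young's inequality, Remark \ref{remark 2.2} and Lemma \ref{lemma 3.4} (respectively Lemma \ref{lemma 3.5} in case (ii)).
\end{itemize}
Once (\ref{eq 4.7}) holds on some $(0,T)$, Lemma \ref{lemma 3.1} gives the solution.

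\emph{Case (ii).} This is exactly Theorem \ref{the 4.2}: hypothesis (iii) there is (\ref{eq 4.8}), and $f'G\le 1$ is assumed. Nothing is needed beyond citing that theorem. The integrability estimate $\int_0^t\|f(u)\|_{L^1_{ul}}$ follows as there, since $\int_0^t[(1\ast l)]^{-1}\,\mathrm d\theta=(k\ast l\ast(1\ast l)^{-1})(t)\le (1\ast k)(t)\sup_{s\le t}(l\ast(1\ast l)^{-1})(s)$, and the supremum is finite by (\ref{eq 4.8}).

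\emph{Case (i).} Here the proof of Theorem \ref{the 4.1} used $l$ nonincreasing only once, in (\ref{eq 4.6}), to get
\[
\int_0^t l(t-\theta)[(1\ast l)(\theta)]^{-\beta}\,\mathrm d\theta\to0,\qquad \beta=\frac{N(\kappa+1)}{2\rho}\in(0,1).
\]
I would replace (\ref{eq 4.6}) by an interpolation argument. Write $[(1\ast l)(\theta)]^{-\beta}=[(1\ast l)(\theta)]^{1-\beta}[(1\ast l)(\theta)]^{-1}$. Since $1\ast l$ is nondecreasing,
\[
\int_0^t l(t-\theta)[(1\ast l)(\theta)]^{-\beta}\,\mathrm d\theta\le [(1\ast l)(t)]^{1-\beta}\sup_{s\in(0,T_2)}\int_0^s l(s-\theta)[(1\ast l)(\theta)]^{-1}\,\mathrm d\theta,
\]
and the right-hand side is $\lesssim [(1\ast l)(t)]^{1-\beta}\to0$ by (\ref{eq 4.8}). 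This is the analogue of the computation displayed before the corollary with $\nu=1-\beta$. With it, $f(\varphi)G(\varphi)$ times a vanishing factor dominates the Duhamel term, so (\ref{eq 4.7}) holds for small $T$.

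For condition (iv) of Definition \ref{def 2.1} in case (i), the same trick gives
\[
\int_0^t[(1\ast l)]^{-\beta}\,\mathrm d\theta=k\ast\big(l\ast(1\ast l)^{-\beta}\big)(t)\lesssim (1\ast k)(t)\,[(1\ast l)(t)]^{1-\beta},
\]
which is finite. Condition (v) follows from the bound $\lesssim[(1\ast l)(t)]^{1-\beta}$ exactly as at the end of Theorem \ref{the 4.1}.

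The main point to check is that no other step of Theorem \ref{the 4.1} silently used the monotonicity of $l$. Lemma \ref{lemma 3.3}'s bound on $b_\delta$ needs it, but the proof only invokes $\|Y(t)\ast v\|_\infty\le l(t)\|v\|_\infty$ via Remark \ref{remark 2.2}, which does not require it. So I expect the argument to go through.
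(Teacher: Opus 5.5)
Your proposal is correct and follows the route the paper intends: you rerun the proof of Theorem~\ref{the 4.1}, replacing the monotonicity-based estimate \eqref{eq 4.6} by the factorization $[(1\ast l)(\theta)]^{-\beta}\le[(1\ast l)(t)]^{1-\beta}[(1\ast l)(\theta)]^{-1}$ for $\theta<t$, and you read off case (ii) from Theorem~\ref{the 4.2}, which never uses monotonicity of $l$. The only difference is that the computation the paper displays just before the corollary bounds $[(1\ast l)(\theta)]^{-1}$ through \eqref{eq 4.9} and $l\ast k\equiv 1$, whereas you use \eqref{eq 4.8} directly; that is the hypothesis the corollary actually assumes, and since \eqref{eq 4.9} implies \eqref{eq 4.8}, your version covers the paper's.
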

	
	\subsection{Nonexistence}
	
	In this subsection, we give a sufficient condition for the equation
	(\ref{1.2}) to be unsolvable  in the supercritical case $\rho \in (0, \frac{N}{2})$.
	\begin{theorem}\label{the 4.3}
		Let $N\geq 1$ and $(k,l)\in(\mathcal {PC})$. Suppose that $f$ satisfies $A_f$. Then there is an initial function $u_0\geq 0$ satisfying $G(u_0)^{-\rho}\in L_{ul}^{1}(\mathbb R^N)$ for any $\rho \in (0,\frac{N}{2})$, such that (\ref{1.2}) does not have a nonnegative mild solution $u$ on the interval $[0, T)$ for any $T>0$.
	\end{theorem}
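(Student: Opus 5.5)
We will follow the strategy of Fujishima--Ioku and Suzuki for the heat and time-fractional cases, adapted to the propagation measure $w(t,\mathrm{d}\tau)$. The plan is to construct a singular initial datum concentrated at the origin, designed so that $G(u_0)^{-\rho}$ is locally integrable for every $\rho<N/2$, yet $u_0$ is too singular for the Duhamel term to stay finite.

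\textbf{Choice of initial datum.} We take
\[
u_0(x):=G^{-1}\bigl(|x|^{2}\,\ell(|x|)\bigr)\ \text{ for } |x|<r_0, \qquad u_0:=0 \text{ otherwise},
\]
where $\ell(r)=|\log r|^{\beta}$ is a slowly varying correction with suitable $\beta>0$. Since $G$ is decreasing with $G(\infty)=0$, this gives $G(u_0)^{-\rho}=|x|^{-2\rho}\ell^{-\rho}\in L^1_{ul}$ for all $\rho<N/2$. (If the intended reading is that $\rho$ is fixed, the same construction with $\ell\equiv1$ and exponent adjusted to $|x|^{N/\rho-\epsilon}$ also works.)

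\textbf{Step 1: lower bound for the free part.} Suppose a nonnegative mild solution $u$ exists on $[0,T)$. Then $u(t)\ge Z(t)\ast u_0$. Using the representation \eqref{eq 2.3}, the measure $-w(t,\mathrm{d}\tau)$ has total mass $1$ and Laplace transform $(\lambda\hat l)^{-1}e^{-\tau/\hat l}$. This concentrates its mass at scales $\tau\asymp(1\ast l)(t)$, so a fixed fraction of the mass lies in $\tau\le 2(1\ast l)(t)$. Together with the Gaussian lower bound $H(\tau,\cdot)\ast\chi_{B(0,\sqrt\tau)}\gtrsim 1$ on $B(0,\sqrt\tau)$, this yields
\[
Z(t)\ast u_0(x)\gtrsim G^{-1}\bigl(C\,(1\ast l)(t)\,\ell\bigr) \quad\text{for } |x|\le\sqrt{(1\ast l)(t)}.
\]
Here we need monotonicity of $u_0$ in $|x|$ and a Jensen-type averaging in reverse. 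Since $G^{-1}$ grows slowly, we use instead $\inf_{B(0,\sqrt s)}u_0$.

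\textbf{Step 2: the source term diverges.} Insert this into condition (iv) of Definition \ref{def 2.1}, $\int_0^t\|f(u(s))\|_{L^1_{ul}}\,\mathrm{d}s<\infty$. On the ball of radius $\sqrt{(1\ast l)(s)}$ we have $f(u)\ge f(G^{-1}(C(1\ast l)(s)\ell))$. Using $(A_f)$, which gives $f(v)G(v)\to$ a regular behaviour and in particular $f(v)\gtrsim G(v)^{-1-\epsilon}$ for large $v$, we obtain
\[
\|f(u(s))\|_{L^1(B)}\gtrsim (1\ast l)(s)^{N/2-1-\epsilon}\,\ell^{-1-\epsilon}.
\]
This alone may still be integrable. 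So instead we iterate once through the Duhamel term: $u\ge Z\ast u_0+\int Y\ast f(Z\ast u_0)$. We then show, via Lemma \ref{lemma 3.6} applied to the convex function $K=G^{-1}$-type transforms, that the second term forces $u(t,x)=\infty$ on a set of positive measure. This argument compares with the ODE $v'=l\ast$-type blow-up: $G(v)$ decreases by an amount $\gtrsim$ mass of $l$ accumulated, exceeding $G(u_0(x))=|x|^2\ell$ near the origin.

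\textbf{Main obstacle.} The key step is this final contradiction. The plan is to use the supersolution-type inequality
\[
G\bigl(u(t,x)\bigr)\le G\bigl(Z\ast u_0\bigr)-c\,(1\ast l)(t)
\]
(obtained from the convexity of $G^{-1}$ and $f'G\to1$) to force $G(u)<0$, which is impossible, on $|x|^2\ell(|x|)<c(1\ast l)(t)/2$.
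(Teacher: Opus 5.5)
Your argument has genuine gaps, starting with the datum. Since $G^{-1}$ is decreasing and $\ell(r)=|\log r|^{\beta}\ge 1$ near $0$, your $u_0=G^{-1}(|x|^2\ell(|x|))$ satisfies $u_0\le G^{-1}(|x|^2)$. The logarithmic factor therefore makes $u_0$ \emph{less} singular than the borderline profile, not more. For $\beta>2/N$, even $G(u_0)^{-N/2}=|x|^{-N}|\log|x||^{-\beta N/2}$ is integrable near the origin, so $G(u_0)^{-N/2}\in\mathcal L^1_{ul}(\mathbb R^N)$. Theorem~\ref{the 4.2} then yields a solution, e.g.\ for $f(u)=e^u$ and $l(t)=t^{\alpha-1}/\Gamma(\alpha)$, so nonexistence cannot be proved for such data.

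Step 2 also uses a reversed inequality. Since $\frac{\mathrm d}{\mathrm du}(fG^{\gamma})=G^{\gamma-1}(f'G-\gamma)$, $(A_f)$ gives $f\gtrsim G^{-\gamma}$ only for $\gamma<1$, while $fG^{1+\epsilon}$ is eventually decreasing. Your bound $f\gtrsim G^{-1-\epsilon}$ already fails for $f=e^u$, where $fG\equiv 1$. This is exactly why the paper uses your fallback datum $u_0=K_1(|x|^\sigma)$, with $K_1=G^{-1}$ extended by $0$ and $2<\sigma<N/\rho$, so that $u_0$ depends on $\rho$. The strict inequality $\sigma>2$ is essential: with $0<\kappa<\sigma-2$ and $\gamma=(\kappa+2)/\sigma<1$, one gets $f\circ G^{-1}(C[(1\ast l)(t)]^{\sigma/2})\gtrsim[(1\ast l)(t)]^{-(\kappa+2)/2}$, and multiplying by $\int_0^t l(t-\theta)\,\mathrm d\theta=(1\ast l)(t)$ still diverges. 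At the $|x|^2$ scale the corresponding product need not diverge: it is constant for $f=e^u$ and tends to $0$ with your $\ell$. That is what pushed you into an iteration.

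Even with $\sigma>2$, Step 1 loses too much. Markov's inequality does show that at least half the mass of $-w(t,\mathrm d\tau)$ lies in $\tau\le 2(1\ast l)(t)$, since its first moment is $(1\ast l)(t)$. But combined with Gaussian mass on a ball this only gives $Z(t,\cdot)\ast_x u_0\ge c_0\inf_B u_0$ with some $c_0<1$. A multiplicative constant in front of $G^{-1}$ is fatal: for $f=e^u$ it turns $f(G^{-1}(v))=v^{-1}$ into $v^{-c_0}$, and for $f(u)=e^{e^u}$ it kills all power growth. The paper instead applies Jensen (Lemma~\ref{lemma 3.6}) to the convex nonincreasing $K_1$ over the full probability measure $Z(\theta,x-y)\,\mathrm dy$. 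This gives $Z(\theta,\cdot)\ast_x u_0\ge K_1\big(Z(\theta,\cdot)\ast_x|\cdot|^\sigma\big)$, so only an \emph{upper} bound $Z(\theta,\cdot)\ast_x|\cdot|^\sigma\le C[(1\ast l)(t)]^{\sigma/2}$ on $|x|\le[(1\ast l)(t)]^{1/2}$ is needed, and constants inside $K_1$ are harmless. Because $\sigma/2>1$, this requires the higher moment bound $-\int_0^\infty\tau^{\sigma/2}w(\theta,\mathrm d\tau)\lesssim[(1\ast l)(\theta)]^{\sigma/2}$, which no first-moment argument provides. The paper derives it via Lemma~\ref{lemma 3.3}; it also follows from $-\int_0^\infty\tau^m w(t,\mathrm d\tau)=m!\,(1\ast l^{\ast m})(t)\le m!\,[(1\ast l)(t)]^m$ for an integer $m\ge\sigma/2$ (with $l^{\ast m}$ the $m$-fold convolution) together with Lyapunov's inequality.

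Finally, your closing inequality $G(u(t,x))\le G(Z\ast u_0)-c\,(1\ast l)(t)$ carries the entire proof but is only asserted; it does not follow from convexity of $G^{-1}$ and $f'G\to1$. Its natural formal version comes from the convexity inequality $\partial_t(k\ast[G(u)-G(u_0)])\le G'(u)\,\partial_t(k\ast[u-u_0])$ plus a comparison principle, both needing more regularity than Definition~\ref{def 2.1} provides. That version reads $G(u(t))\le Z(t,\cdot)\ast_x G(u_0)-(1\ast l)(t)$. Since $Z(t,\cdot)\ast_x|\cdot|^2=|x|^2+2N(1\ast l)(t)$, it cannot force $G(u)<0$ for data with $G(u_0)\ge|x|^2$ near the origin, such as yours.

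With $\sigma>2$ none of this is needed. The first Duhamel term $\int_0^t Y(t-\theta,\cdot)\ast_x f\big(Z(\theta,\cdot)\ast_x u_0\big)\,\mathrm d\theta$ is already $\gtrsim[(1\ast l)(t)]^{-\kappa/2}\to\infty$ near the origin. This contradicts condition (v) of Definition~\ref{def 2.1}, not the weaker condition (iv) you tried to use.
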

	\begin{proof}
		Define
		\begin{align*}
			K_1(u):=
			\left\{\begin{aligned}
				&G^{-1}(u), \quad 0<u<\lim\limits_{u\to 0}G(u)=G(0),\\
				&0,\quad u\geq G(0), \end{aligned}\right.
		\end{align*}
		then $K_1(u)$ is a nonincreasing and  convex function on $(0,\infty)$. Take $\sigma$ such that $2<\sigma<\frac{N}{\rho}$. Define $u_0(x):=K_1(|x|^\sigma)$, then $G(u_0)^{-\rho}\in L_{ul}^1(\mathbb R^N)$.
		
		Suppose that (\ref{1.2}) has a local mild solution in the sense of Definition \ref{def 2.1} on the interval $[0,T)$ for a certain constant $T>0$. Using (iii) of Definition \ref{def 2.1} we obtain $Z(t,\cdot)\ast_x u_0<u(t)$ in  $(0,T)\times \mathbb R^N$. This along with (v) of Definition \ref{def 2.1} shows that
		\begin{align}\label{eq 4.10}
			\int_{0}^{t} Y(t-\theta,\cdot)\ast_x f\big(Z(\theta,\cdot)\ast_x u_0 \big)\, \mathrm{d}\theta \leq \int_{0}^{t} Y(t-\theta,\cdot)\ast_x f\big( u(\theta)\big)\, \mathrm{d}\theta \to 0
		\end{align}
		in $L^ {\infty}(\mathbb R^N)$ as $t \to 0$.
		
		Let $u_1(x) = |x|^\sigma$. We now estimate the upper bound of $Z(\theta,\cdot) \ast_x u_1$ for $x \in [(1 \ast l)(t)]^{1/2}$ with $\theta \in (0,t)$. Note that
		\begin{align*}
			&\quad \int_{\mathbb R^N} H(\tau,x-y) u_1(y)\, \mathrm{d}y\\
			&= \int_{|y| \leq 2[(1 \ast l)(t)]^{\frac{1}{2}  }} H(\tau,x-y)  u_1(y)\, \mathrm{d}y + \int_{|y| > 2[(1 \ast l)(t)]^{\frac{1}{2}  }} H(\tau,x-y) u_1(y)\, \mathrm{d}y\\
			&=J_1(\tau,x) + J_2(\tau,x). 
		\end{align*}
		It is clear that
		\begin{align}\label{eq 4.11}
			\begin{aligned}
				J_1(\tau,\cdot) 
				\leq 2^\sigma[(1\ast l)(t)]^{\frac{\sigma}{2} } \sup_{|x|\leq [(1\ast l)(t)]^{\frac{1}{2} }}\int_{\mathbb R^N} H(\tau,x-y)\, \mathrm{d}y
				\leq 2^\sigma[(1\ast l)(t)]^{\frac{\sigma}{2}}
			\end{aligned}
		\end{align}
		for $|x|\leq [(1\ast l)(t)]^{\frac{1}{2} } $. We next estimate $J_2(t,x)$. Observe that the conditions $|x| \leq [(1 \ast l)(t)]^{1/2}$ and $|y| > 2[(1 \ast l)(t)]^{1/2}$ imply $|x-y| \geq \big||y| - |x|\big| \geq \frac{|y|}{2}$. Consequently,
		\begin{align}\label{eq 4.12}
			\begin{aligned}
				J_2(\tau,x) 
				&=(4\pi \tau)^{- \frac{N}{2}} \int_{ |y| > 2[(1 \ast l)(t)]^{\frac{1}{2}  } } e^{ \frac{-|x-y|^2}{4\tau} }|y|^\sigma \, \mathrm{d}y\\
				&\lesssim \tau^{-\frac{N}{2} } \int_{ |y| > 2[(1 \ast l)(t)]^{\frac{1}{2}  } } e^{ \frac{-|y|^2}{8\tau} }|y|^\sigma \, \mathrm{d}y\\
				&\lesssim \tau^{-\frac{N}{2} } \int_0^\infty e^{ \frac{-r^2}{8\tau} }r^{N-1+\sigma} \, \mathrm{d}r \lesssim \tau^{ \frac{\sigma}{2} }.  
			\end{aligned}  
		\end{align}
		Combining \eqref{eq 2.3}, \eqref{eq 4.11}, and \eqref{eq 4.12} yields
		\begin{align*}
			Z(\theta,\cdot)\ast_x u_1 &=- \int_0^\infty \int_{\mathbb R^N} H(\tau,x-y) u_1(x)\,\mathrm{d}y \mathrm{d}\tau =
			-\int_0^\infty [J_1(\tau,x) +J_2(\tau,x) ] w(t,\mathrm{d}\tau) \, \mathrm{d}\tau\\
			&\lesssim [(1\ast l)(t)]^{\frac{\sigma}{2} } 
			+c_{\frac{\sigma}{2} }(t),
		\end{align*}
		where $c_{\frac{\sigma}{2}}(t):= -\int_0^\infty \tau^{ \frac{\sigma}{2}} w(t,\mathrm{d}\tau)$. Clearly, there exists a unique positive integer $n_1$ with $\frac{N}{2\rho} \in (n_1, n_1+1]$. Since $2<\sigma< \frac{N}{\rho}$ indicates $\frac{\sigma}{2(n_1+1)}\in (0,1)$.   
		An application of Lemma \ref{lemma 2.1} and Lemma \ref{lemma 3.3} gives
		\begin{align*}
			\widehat{c}_{\frac{\sigma}{2} }(\lambda) &= \int_0^\infty \tau^{ \frac{\sigma}{2} } (\lambda \widehat{l}(\lambda))^{-1} e^{ -\tau\widehat{l}(\lambda)^{ -1} } \, \mathrm{d}\tau =  \int_0^\infty \tau^{ \frac{\sigma}{2} } (\lambda \widehat{l}(\lambda))^{-1} e^{ -\tau\widehat{l}(\lambda)^{ -1} } \, \mathrm{d}\tau\\
			&= \Gamma(1+\frac{\sigma}{2}) \lambda^{-1} \widehat{l}(\lambda)^{ \frac{\sigma}{2} } = \Gamma(1+\frac{\sigma}{2}) \lambda^{-1} \left[\widehat{l}(\lambda)^{ \frac{\sigma}{2(n_1+1)}  } \right]^{n_1+1}\\
			&=\Gamma(1+\frac{\sigma}{2}) \lambda^{-1} \widehat{b}_{1-  \frac{\sigma}{2(n_1+1)}  }(\lambda)^{n_1+1} 
		\end{align*}
		By the uniqueness of the Laplace transform, we have
		\begin{align*}
			c_{ \frac{\sigma}{2} }(t) = \Gamma(1+\frac{\sigma}{2}) (1 \ast \underbrace{ 	b_{1-  \frac{\sigma}{2(n_1+1)}} \ast  \cdots \ast b_{1-  \frac{\sigma}{2(n_1+1)}}  }_{ n_1+ 1 \text{ terms }} )(t),  
		\end{align*}
		which implies $c_{\frac{\sigma}{2}}(t) \lesssim [(1\ast b_{1-\frac{\sigma}{2(n_1+1)} })(t)]^{ n_1+1 }$. Applying Lemma \ref{lemma 3.3} again yields
		\begin{align*}
			c_{\frac{\sigma}{2}}(t) \lesssim \left[\int_0^t l(\theta)(1\ast l)(\theta)^{ \frac{\sigma}{2(n_1+1)} -1} \, \mathrm{d}\theta \right]^{ n_1+1 } \lesssim [(1 \ast l)(t)]^{\frac{\sigma}{2} }. 
		\end{align*}
		Therefore, we have
		\begin{align}\label{eq 4.13}
		    Z(\theta,\cdot)\ast_x u_1 \leq C[(1 \ast l)(t)]^{\frac{\sigma}{2} } \text{ ~for~ } 0<\theta<t, \ x \in [(1\ast l)(t)]^{ \frac{1}{2} },  
		\end{align}
		where $C$ is sufficiently large. 
		
		Since $K_1$ is nonincreasing and convex, it follows from Lemma \ref{lemma 3.6} that for $\theta \in (0,t)$ and $|x|<\big[(1\ast l)(t) \big]^{\frac{1}{2}}$,
		\begin{align}\label{eq 4.14}
				\begin{aligned}
					Z(\theta,\cdot)\ast_x u_0
					&= Z(\theta,\cdot)\ast_x K_1 ( u_1) \geq K_1\left(Z(\theta,\cdot)\ast_x u_1 \right)\\
					&\geq K_1\left(\left\|Z(\theta,\cdot)\ast_x (u_1) \right\|_{L^{\infty}\Big(|x|<\big[(1\ast l)(t) \big]^{\frac{1}{2}}\Big)} \right)\\
					&\geq K_1\Big(C\big[(1\ast l)(t) \big]^{\frac{\sigma}{2}} \Big).
				\end{aligned}
		\end{align}
		Take $\kappa>0$ such that $\kappa+2<\sigma$. For sufficiently large $u>0$, $f(u)\big[G(u) \big]^{\frac{\kappa+2}{\sigma}}$ is increasing and $f(u)\big[G(u) \big]^{\frac{\kappa+2}{\sigma}}\gtrsim 1$. It is enough to prove that  $v^{-\frac{\kappa+2}{\sigma}} \lesssim f\circ G^{-1}(v)$ for sufficiently small $v$. {We choose $t>0$ small enough to obtain}
		\begin{align*}
			f\circ K_1\Big(C\big[(1\ast l)(t) \big]^{\frac{\sigma}{2}} \Big)
			=f\circ G^{-1}\Big(C\big[(1\ast l)(t) \big]^{\frac{\sigma}{2}} \Big)
			\gtrsim \big[(1\ast l)(t) \big]^{-\frac{\kappa+2}{2}}.
		\end{align*}
		This together with  (\ref{eq 4.14}), Remark \ref{remark 2.2} shows that
		\begin{align*}
			Y(t-\theta,\cdot)\ast_x f\big(Z(\theta,\cdot)\ast_x u_0 \big)
			&\geq Y(t-\theta,\cdot)\ast_x \left(f\circ K_1
			\Big(C\big[(1\ast l)(t) \big]^{\frac{\sigma}{2}} \Big)\right)
			\\
			&=l(t-\theta) f\circ
			K_1\Big(C \big[(1\ast l)(t) \big]^{\frac{\sigma}{2}} \Big)\\
			&\gtrsim l(t-\theta)\big[(1\ast l)(t) \big]^{-\frac{\kappa+2}{2}}.	
		\end{align*}
		Obviously $\lim\limits_{t\to 0}(1\ast l)(t)=0$, then we have
		\begin{align*}
			\int_{0}^{t} Y(t-\theta,\cdot)\ast_x f\big(Z(\theta,\cdot)\ast_x u_0 \big)\, \mathrm{d}\theta
			&\gtrsim \int_{0}^{t} l(t-\theta)\big[(1\ast l)(t) \big]^{-\frac{\kappa+2}{2}}\, \mathrm{d}\theta\\
			&=\big[(1\ast l)(t) \big]^{-\frac{\kappa}{2}}\to \infty ~~{\rm as}~ t \to 0.
		\end{align*}
		This is in contradiction with (\ref{eq 4.10}). Therefore, for any $T>0$, (\ref{1.2}) does not exist a nonnegative mild solution.
	\end{proof}
	
	\section{Applications}
	In this section, we give some examples to illustrate the wide applicability of the obtained results. For a general pair $(k,l) \in (\mathcal {PC})$, it is difficult to verify the condition {\rm(iii)} in Theorem \ref{the 4.2}. To do this, we only need the short-time behavior of $k$, $l$ and $(1\ast l)(t)$, which can be established by the Karamata-Feller Tauberian theory,
	it claims that the short-time (long-time) behavior of a function $g(t)$ is determined by the behavior of its Laplace transform $\widehat {g}(\lambda)$ as $\lambda \to \infty$ ($\lambda \to 0$). Thus we introduce a kind of Karamata-Feller Tauberian theory, see \cite[Chapter XIII]{W. Feller}.
	
	\begin{lemma}\label{lemma 5.1}
		Let $L:(0, \infty) \to (0, \infty)$ denote a function that is slowly varying at $\infty$, that is, for every fixed $\varsigma>0$ we have $L(t\varsigma)/L(t) \to 1$ as $t \to \infty$. Let $\beta>0$ and $g:(0, \infty) \to R$ is denoted as a monotone function whose Laplace transform $\widehat{g}(\lambda)$ exists for all $\lambda \in C_{+}:=\{\lambda \in C:Re \ \lambda >0\}$. Then
		$$
		\widehat{g}(\lambda) \sim \frac{1}{\lambda^{\beta}}L(\lambda) ~~{as}~ \lambda \to \infty
		$$
		if and only if
		$$
		g(t) \sim \frac{t^{\beta - 1}}{\varGamma(\beta)} L\left(\frac{1}{t}\right) ~~{as}~ t \to 0.
		$$
	\end{lemma}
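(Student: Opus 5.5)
Since this is the classical Karamata--Feller theorem (see \cite[Chapter XIII]{W. Feller}), I would reproduce the standard argument rather than look for a new one. The argument has two directions. Throughout I would replace $g$ by its restriction near $t=0$ and assume $g\ge 0$ there, which is the only case used later. This is harmless: changing $g$ on $[t_0,\infty)$ changes $\widehat g(\lambda)$ only by $O(e^{-\lambda t_0})$, which is negligible compared with $\lambda^{-\beta}L(\lambda)$, since slowly varying functions satisfy $L(\lambda)\ge \lambda^{-\epsilon}$ for large $\lambda$.

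\emph{Abelian direction.} Assume $g(t)\sim t^{\beta-1}L(1/t)/\Gamma(\beta)$ as $t\to0$. I substitute $t=s/\lambda$ to get
\begin{align*}
\frac{\widehat g(\lambda)}{\lambda^{-\beta}L(\lambda)}=\int_0^\infty e^{-s}\,\frac{g(s/\lambda)}{\lambda^{1-\beta}L(\lambda)}\,\mathrm{d}s .
\end{align*}
By slow variation, the integrand converges pointwise to $e^{-s}s^{\beta-1}/\Gamma(\beta)$, whose integral is $1$. To pass to the limit I would use Potter's bounds $L(\lambda/s)/L(\lambda)\le A\max\{s^{\epsilon},s^{-\epsilon}\}$ with $\epsilon<\min\{\beta,1\}$. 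These give an integrable dominating function on $s\le \lambda t_0$, and dominated convergence finishes this direction.

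\emph{Tauberian direction.} Set $U(t)=\int_0^t g$ and introduce the rescaled measures $U_\lambda(\mathrm{d}s)=U(\mathrm{d}s/\lambda)/(\lambda^{-\beta}L(\lambda))$. Their Laplace transforms are $\widehat g(\lambda\mu)/(\lambda^{-\beta}L(\lambda))$, and these converge to $\mu^{-\beta}$ for every $\mu>0$ by the hypothesis and slow variation. The extended continuity theorem for Laplace transforms of measures then yields vague convergence of $U_\lambda$ to the measure $s^{\beta-1}\mathrm{d}s/\Gamma(\beta)$. Since the limit has no atoms, this gives $U(t)\sim t^{\beta}L(1/t)/\Gamma(\beta+1)$ as $t\to0$.

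The last step, which I expect to be the main obstacle, is the monotone density argument that passes from $U$ back to $g$. Suppose $g$ is nonincreasing. For $h>0$ I would write
\begin{align*}
h t\,g(t(1+h))\le U(t(1+h))-U(t)\le h t\,g(t).
\end{align*}
Dividing by $t^{\beta}L(1/t)$ and using the asymptotics of $U$, together with $L(1/(t(1+h)))/L(1/t)\to1$, I get
\begin{align*}
\liminf_{t\to0}\frac{t\,g(t)}{t^{\beta}L(1/t)}\ge\frac{(1+h)^\beta-1}{h\,\Gamma(\beta+1)} .
\end{align*}
The same argument with $t(1-h)$ in place of $t(1+h)$ gives the matching upper bound. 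Letting $h\to0$ yields $g(t)\sim t^{\beta-1}L(1/t)/\Gamma(\beta)$. The nondecreasing case is symmetric, with the inequalities reversed. The delicate points here are the uniformity of the slow variation, which the uniform convergence theorem for slowly varying functions supplies, and checking that monotonicity of $g$ is consistent with the sign conventions near $0$.
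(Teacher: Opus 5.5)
Your proposal is correct and is essentially the classical Karamata--Feller argument that the paper invokes only by citation to Feller's Chapter XIII, without giving a proof: the continuity theorem applied to rescaled measures, followed by the monotone density theorem. Your Potter-bound, dominated-convergence proof of the Abelian half is a standard variant of Feller's.

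The one step to tighten is the sign reduction, which you justify by ``the only case used later.'' Instead, note that a monotone $g$ has constant sign near $0$, and both hypotheses exclude $g\le 0$ there; in the Tauberian direction this is because otherwise $\widehat g(\lambda)\le O(e^{-\lambda t_0})=o(\lambda^{-\beta}L(\lambda))$. Also, the density step needs slow variation only at the fixed ratios $1\pm h$, not the uniform convergence theorem.
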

	
	Next, we give the several examples of type $(\mathcal {PC})$. Although the asymptotic behavior of these examples may already be available in the literature, we provide detailed calculations for the sake of completeness and self-containedness of this paper.
	
	{\bf Example 5.1}	
	Let $0<\alpha<\beta<1$. Consider the kernel
	$$
	k(t)=g_{1-\alpha}(t) + g_{1-\beta}, \ t>0.
	$$
	There exists a function $l \in L_{loc}^{1}(\mathbb R_{+})$ such that $(k,l) \in (\mathcal {PC})$, see \cite[Example 2]{J.C. Pozo}. Moreover, we can see that
	$$
	\widehat l(\lambda) =  \frac{1}{\lambda^{\alpha} + \lambda^{\beta}}, \quad \lambda>0.
	$$
	Since $0<\alpha<\beta<1$, we obtain
	$$
	\widehat{(1\ast l)}(\lambda) \sim \frac{1}{\lambda^{1+\beta}} ~~{\rm as}~ \lambda \to \infty.
	$$
	By Lemma \ref{lemma 5.1}, we get
	$$
	(1\ast l)(t) \sim t^{\beta} ~~{\rm as}~ t \to 0.
	$$
	Note that $k(t)\sim t^{-\beta}$ as $t \to 0$.
	The above conclusion can also be extended to kernels that have the form $k(t)=\sum_{k=1}^{n} c_j g_{1-\alpha_j}(t)$, where $c_j>0$ and  $0<\alpha_1<\alpha_2<\cdots<\alpha_n<1$.
	
	{\bf Example 5.2}
	The time fractional case with weight. Let $0<\alpha<\beta<1$. Consider the kernel
	$$
	k(t)=g_\beta(t)E_{\alpha,\beta}(-\eta t^\alpha), \quad t>0.
	$$
	There exists a function $l \in L_{loc}^{1}(\mathbb R_{+})$ such that $(k,l) \in (\mathcal {PC})$, see \cite[Example 3]{J.C. Pozo}. Moreover, we have
	$$
	\widehat k(\lambda)=\frac{1}{\lambda^{\beta}} L_1\left(\lambda \right)~~{\rm and}~~ \widehat{(1\ast l)}(\lambda)=\frac{1}{\lambda^{2-\beta}}L_2(\lambda), \quad \lambda>0,
	$$
	where $L_1(\lambda)=\lambda^\alpha/(\lambda^\alpha+\eta)$, $L_2(\lambda)=\left[L_1(\lambda) \right]^{-1}$, for $\lambda>0$. $L_1(t)$ and $L_2(t)$ is slowly varying at $t=\infty$. By $0<\alpha<\beta<1$ and Lemma \ref{lemma 5.1} we obtain
	$$
	k(t)\sim t^{\beta -1} ~~{\rm and}~~ (1\ast l)(t)\sim t^{1-\beta} ~~{\rm as}~~ t\to 0.
	$$
	
	{\bf Example 5.3}
	Let $\gamma>0$ and $\alpha \in (0,1)$. Consider the following kernels
	$$
	k(t)=g_{1-\alpha}(t)e^{-\gamma t}+\gamma \int_{0}^{t} g_{1-\alpha}(\theta)e^{-\gamma \theta}\, \mathrm{d}\theta ~~{\rm and}~~ l(t)=g_\alpha(t)e^{-\gamma t}, \quad t>0.
	$$
	It follows from \cite[Example 3.5]{F. Alegria} that $(k,l) \in (\mathcal {PC})$. Direct calculation yields
	$$
	k(t) \sim g_{1-\alpha}(t) ~~{\rm and}~~ l(t) \sim g_\alpha(t) ~~{\rm as}~ t \to 0.
	$$
	Obviously, the kernel pairs $(k,l)$ introduced above meet the requirements of all the theorems and corollary in Section 4.
	
	Below we present another interesting example. 
	
	{\bf Example 5.4}
	Let $n\in \mathbb N$. Consider the kernel
	$$
	k(t)=\int_{0}^{t}g_\alpha(t)\alpha^n\, \mathrm{d}\alpha, \quad t>0.
	$$
	It follows from \cite[Example 4]{J.C. Pozo} that there exists a monotone decreasing function $l \in L_{loc}^{1}(\mathbb R_{+})$ such that $(k,l) \in (\mathcal {PC})$, and
	$$
	\widehat l(\lambda) =\frac{1}{\lambda} L_1(\lambda) ~~{\rm and}~~ \widehat{(1\ast l)}(\lambda) =\frac{1}{\lambda^2} L_1\left(\lambda \right),\quad \lambda>0,
	$$
	where
	\begin{align*}
		L_1(\lambda)=\frac{\log^{n+1}(\lambda)}{n!\left(1 - \sum_{m=0}^n \frac{\log^m(\lambda)}{m!\lambda} \right)}, \quad \lambda>0.
	\end{align*}
	We can see that $L_1(t)$ is slowly varying at $t=\infty$. By Lemma \ref{lemma 5.1}, we obtain
	$$
	l(t)\sim \left|\log^{n+1}(t)\right| ~{\rm and}~ (1\ast l)(t)\sim t \left|\log^{n+1}(t)\right| ~~{\rm as}~ t\to 0.
	$$
	By direct calculation, we obtain
	\begin{align*}
		\int_{0}^{t} l(t-\theta)\left[(1 \ast l)(\theta)\right]^{-1}\, \mathrm{d}\theta
		\gtrsim \int_{0}^{\frac{t}{2}} \frac{\left|\log^{n+1}(t-\theta)\right|}{\theta \left|\log^{n+1}(\theta)\right|}\, \mathrm{d}\theta \gtrsim \left|\log(t)\right| ~~{\rm as}~ t \to 0
	\end{align*}	
	for $n \geq 1$, and $\left(l \ast(1 \ast l)^{-1}\right)$ is not well-defined
	on $[0,T)$ with any $T \in (0, \infty)$ for $n=0$. Note that
	\begin{align*}
		k(t)=\int_{0}^{1} \frac{t^{\alpha-1}\alpha^{n+1}}{\varGamma(1+\alpha)}\, \mathrm{d}\alpha \asymp \int_{0}^{1} t^{\alpha-1} \alpha^{n+1}\, \mathrm{d}\alpha ~~{\rm as}~ t \to 0.
	\end{align*}
	Using the integral-by-parts formula, we obtain
	\begin{align*}
		k(t)\asymp \frac{(-1)^{n+2}(n+1)!}{t \log^{n+2}(t)}+ \frac{(n+1)!}{\log^{n+2}(t)}\sum_{m=0}^{n+1} \frac{(-1)^{n+1-m} \log^m(t)}{m!},					
	\end{align*}
	which yields $ k(t) \asymp t^{-1} \left| \log^{n+2}(t) \right|^{-1} ~~{\rm as}~ t \to 0$. Obviously, such kernel pairs do not satisfy \eqref{eq 4.9}.
	
	The kernel $k$ in Example 5.4 is a distributed-order kernel, which characterizes the memory effects in ultraslow diffusion processes. In this case, we can only apply Theorem \ref{the 4.1} to obtain the local existence in the subcritical regime ($\rho>\frac{N}{2}$), and use Theorem \ref{the 4.3} to derive the nonexistence result in the supercritical regime ($\rho\in(0, \frac{N}{2})$). The solvability of problem \eqref{1.2} in the critical case $\rho = \frac{N}{2}$ remains open and requires further investigation.
	
	\section*{Statements and Declarations }
	\noindent{\bf Competing interests}\\
	The authors declare that there is no conflict of interests regarding the publication of this paper.\\
	\noindent{\bf Funding}\\
	This work was supported by National Natural Science Foundation of China (12071396) and the Project of Hunan Provincial Education Department of China (23B0125).\\
	\noindent{\bf Availability of data and materials}\\
	Not applicable.

\end{document}